\documentclass[11pt,leqno,oneside,a4paper]{amsart}
\usepackage{amsmath,amssymb,amsthm,scalefnt}
\usepackage[alphabetic,abbrev]{amsrefs} 
\usepackage[centertableaux]{ytableau}
 \usepackage{tikz}

\makeatletter 
\def\section{\@startsection{section}{1}%
\z@{.7\linespacing\@plus\linespacing}{.5\linespacing}%
{\normalfont\bfseries\centering\large}}
\makeatother

\newcommand{\dom}{\unrhd}  
\newcommand{\notdom}{\ntrianglerighteq} 
\newcommand{\lessdom}{\unlhd}  
\newcommand{\slessdom}{\lhd}  
\newcommand{\notlessdom}{\ntrianglelefteq}  
\newcommand{\stt}{\mathsf{t}}  
\newcommand{\sts}{\mathsf{s}}  

\DeclareRobustCommand{\stirling}{\genfrac\{\}{0pt}{}} 

\newcommand{\C}{{\mathbb C}}

\newcommand{\Ptn}{\mathcal{P}}   
\newcommand{\End}{\operatorname{End}}

\newcommand{\GL}{\operatorname{GL}}

 \newcommand{\Sym}{\mathfrak{S}}
\newcommand{\Symset}{\operatorname{Sym}}

\newcommand{\sgn}{\operatorname{sgn}}

\newcommand{\ann}{\operatorname{ann}}
\newcommand{\rows}{\operatorname{rows}}
\newcommand{\cols}{\operatorname{cols}}
\newcommand{\bV}{\mathbf{V}} 
\newcommand{\bv}{\mathbf{v}} 
\newcommand{\rank}{\operatorname{rank}} 
\newcommand{\Hook}{\mathcal{H}} 
\newcommand{\eps}{\varepsilon} 
\newcommand{\op}{\operatorname{op}} 
\newcommand{\ind}{\operatorname{ind}} 
\newcommand{\im}{\operatorname{im}} 
 \newcommand{\tr}{\prime} 

 \newtheorem{thm}{Theorem}[section]  
\newtheorem{lem}[thm]{Lemma}
\newtheorem{prop}[thm]{Proposition}
\newtheorem{cor}[thm]{Corollary}
\newtheorem*{thm*}{Theorem}
\newtheorem*{lem*}{Lemma}
\newtheorem*{prop*}{Proposition}
\newtheorem*{cor*}{Corollary}

\theoremstyle{definition}
\newtheorem{defn}[thm]{Definition}
\newtheorem{example}[thm]{Example}

\newtheorem*{defn*}{Definition}
\newtheorem*{example*}{Example}
\newtheorem*{examples*}{Examples}

 \newtheorem{rmk}[thm]{Remark}

\newtheorem*{rmk*}{Remark}
\newtheorem*{rmks*}{Remarks}

 \parskip = 2pt
\allowdisplaybreaks

\begin{document}
\title[Second fundamental theorem for partition algebras]{An integral
  second fundamental theorem of invariant theory for partition
  algebras}

  \author{Chris Bowman}
       \address{Department of Mathematics, 
University of York, Heslington, York, YO10 5DD, UK}
\email{Chris.Bowman-Scargill@york.ac.uk}

  \author{Stephen Doty}
  \address{Department of Mathematics and Statistics, Loyola University
    Chicago, Chicago, IL 60660 USA}
  \email{doty@math.luc.edu}

  \author{Stuart Martin}
  \address{DPMMS, Centre for Mathematical Sciences, Wilberforce Road,
  Cambridge, CB3 0WB, UK}
  \email{S.Martin@dpmms.cam.ac.uk}

\begin{abstract}\noindent 
We prove that the kernel of the action of the group algebra of the
Weyl group acting on tensor space (via restriction of the action from
the general linear group) is a cell ideal with respect to the
alternating Murphy basis. This provides an analogue of the second
fundamental theory of invariant theory for the partition algebra over
an arbitrary commutative ring and proves that the centraliser algebras
of the partition algebra are cellular.  We also prove similar results
for the half partition algebras.
\end{abstract}
\maketitle

\section*{Introduction}\noindent
The partition algebra $\Ptn_r(n)$ over the complex field $\C$ arose in
work of Paul Martin \cites{Marbook,Martin:94} and (independently)
Vaughan Jones \cite{Jones} as a generalisation of the Temperley--Lieb
algebra for $n$-state $r$-site Potts models in statistical mechanics.
Suppose that $\bV$ is an $n$-dimensional complex vector space.  The
algebra $\Ptn_r(n)$ arises as the (generic) centraliser 
  of the   group of permutation matrices  $W_n\leq \GL(\bV)$ 
acting  
\color{black} 
 on tensor space $\bV^{\otimes r}$. By the
general theory of finite dimensional algebras it follows that the $(\C
W_n, \Ptn_r(n))$-bimodule $\bV^{\otimes r}$ satisfies Schur--Weyl
duality (see \cite{Halverson-Ram}), in the sense that the image of
each representation coincides with the centraliser algebra of the the
other action.

The partition algebra has found surprising applications to Deligne's
tensor categories (see \cites{Deligne,CO11,CW12,CO14}) and the study of the
Kronecker coefficients (see \cite{BDO15}).  Heuristically speaking,
this is because the partition algebra controls stability phenomena
arising in the representation theory of symmetric groups. 

More generally, let $\bV$ be a free $\Bbbk$-module of rank $n$ over an
arbitrary (unital) commutative ring $\Bbbk$. The partition algebra makes
sense as an algebra over $\Bbbk$, and $\bV^{\otimes r}$ is a $(\Bbbk
W_n, \Ptn_r(n))$-bimodule. In a companion paper \cite{BDM:SWD}, the
authors have shown that Schur--Weyl duality holds over $\Bbbk$.
Therefore, it is natural to expect that the partition algebra will
continue to influence stability phenomena of symmetric groups over
fields of positive characteristic.

The main result of this paper, Theorem \ref{thm:main}, is that for any
commutative ring $\Bbbk$, the annihilator of the $\Bbbk W_n$-action on
$\bV^{\otimes r}$ is a cell ideal with respect to the alternating
Murphy basis of $\Bbbk W_n$. (The theory of cellular algebras was
introduced in \cite{Graham-Lehrer}.) In light of Schur--Weyl duality,
our main result implies (Corollary \ref{cor:main}) that the
centraliser algebra $\End_{\Ptn_r(n)}(\bV^{\otimes r})$ inherits a
cellular structure from that of $\Bbbk W_n$ --- even better, we obtain
an explicit cellular basis.  Thus our main result provides an analogue
of the second fundamental theory of invariant theory for the partition
algebra over an arbitrary commutative ring $\Bbbk$. Similar results for
other diagram algebras have been obtained in
\cites{Haerterich,MR2979865,BEG}.  

One can ask for conditions under which a centraliser algebra of a
cellular algebra is again cellular; this question seems to be poorly
understood in general. Our result establishes another positive
occurrence of such a phenomenon. In a different direction, the
cellularity of the centraliser algebra $\End_{W_n}(\bV^{\otimes r})$
was only recently established \cite{Donkin}; see also
\cites{Halverson-Ram, BH:1, BH:2} for explicit descriptions of the
annihilator of the action of $\Ptn_r(n)$.
 
Finally, we remark that Paul Martin \cite{Martin:2000} introduced the
half partition algebras $\Ptn_{r+\,1/2}(n)$ in order to collate the
individual (ordinary and half) partition algebras together in a tower
of recollement structure. Our results treat both algebras
$\End_{\Ptn_r(n)}(\bV^{\otimes r})$ and
$\End_{\Ptn_{r+\,1/2}(n)}(\bV^{\otimes r})$ uniformly.

\subsection*{Acknowledgements.} The authors thank the
organisers of the conference ``Representation theory of symmetric
groups and related algebras'' held at the Institute for Mathematical
Sciences, National University of Singapore (December 2017), for
providing an excellent venue for working on this paper, and Stephen
Donkin for useful comments on an earlier version.

\section{Combinatorics of symmetric groups}\label{sec:comb}
\noindent
We write $\Symset_S$ for the \emph{symmetric group} of permutations of
a set $S$ (the bijections of $S$ under composition). We write
$\Symset_d$ for $\Symset_{\{1, \dots, d\}}$. For any set $S$ with
$|S|=d$ we identify $\Symset_S$ with $\Symset_d$ via the obvious
isomorphism.    We let     $*$ denote the 
  anti-involution  which sends  $w$ to
  $w^{-1}$, for any $w\in \Symset_d$ and we extend this $\Bbbk$-linearly to the group algebra. 
  \color{black}

A \emph{weak composition} of a non-negative integer $d$ is a way of
writing $d$ as the sum of a sequence of non-negative integers. There
are infinitely many weak compositions of $d$, because we can always
append 0 to any weak composition. Weak compositions are usually
identified with infinite sequences with finite support (finitely many
non-zero terms). The \emph{length} of a weak composition $\lambda=(\lambda_1,
\lambda_2, \dots)$ is the largest $\ell$ for which $\lambda_\ell \ne
0$. Thus we write $\lambda$ as $\lambda = (\lambda_1, \dots,
\lambda_\ell)$. There are finitely many weak compositions with a
specified upper bound on length.

A \emph{composition} of $d$ is a way of writing $d$ as the sum of a
sequence of (strictly) positive integers. So a composition is a weak
composition with positive parts, and its length is the number of
parts. We stipulate that the integer 0 has one composition, of length
0, defined by the empty sequence. We write $\lambda \vDash d$ to mean
that $\lambda = (\lambda_1, \dots, \lambda_\ell)$ is a composition of
$d$.  

 Two sequences that differ in the order of their terms define different
compositions of their sum, while they are considered to define the
same \emph{partition} of that number. Thus, partitions may be
identified with ordered compositions $(\lambda_1, \dots,
\lambda_\ell)$ satisfying $\lambda_1 \ge \cdots \lambda_{\ell-1} \ge
\lambda_\ell$. We write $\lambda \vdash d$ to mean that $\lambda$ is a
partition of $d$.     We  write
 $\lambda\trianglerighteq \mu$ and say that $\lambda$ dominates $\mu$  if
 $$
\textstyle \sum_{1\leq i \leq k}\lambda_i \geq  \sum_{1\leq i \leq k}\mu_i \; \text{  for all } k\geq 1.
 $$  

Given a weak composition $\lambda=(\lambda_1, \dots, \lambda_\ell)$ of
$d$, a Young diagram of shape $\lambda$ is a planar arrangement of
boxes into rows, with $\lambda_i$ boxes in the $i$th row, for each $i =
1, \dots, \ell$.  A $\lambda$-tableau $\stt$ is a numbering of the
boxes by the numbers $1, \dots, d$; i.e., a map from $\{1, \dots, d\}$
to the boxes.  A tableau is {\em row standard} if the numbers in each
row are increasing when read from left to right, and {\em standard} if
row standard and the numbers in each column are increasing when read
from top to bottom.    Given $1\leq k \leq n$, we let $\stt{\downarrow}_{\{1,\dots ,k\}}$ be the subtableau of $\stt$ whose  entries belong to the set
  $\{1,\dots,k\}$.
We write $\stt \trianglerighteq  \sts$ if $\stt{\downarrow}_{\{1,\dots ,k\}} \trianglerighteq  \sts{\downarrow}_{\{1,\dots ,k\}}$ for all $1\leq k \leq n$ and refer to this as the dominance order on standard $\lambda$-tableaux.

If $H$ is a subgroup of a finite group $G$ and $V$ a left or right
$\Bbbk H$-module, where $\Bbbk$ is a commutative ring, we respectively
have the left or right induced module
\[
\ind_H^G V = \Bbbk G \otimes_{\Bbbk H} V \text{ or }
V \otimes_{\Bbbk H} \Bbbk G.
\]
To each weak composition $\lambda =
(\lambda_1, \dots, \lambda_\ell)$ of $d$ there corresponds the
following data:
\begin{enumerate}\renewcommand{\labelenumi}{(\roman{enumi})}
\item The {\em row-reading tableau} $\stt^\lambda$ of shape $\lambda$, in
  which the numbers $1, \dots, d$ are written from left to right in the
  rows.
\item The Young subgroup $\Symset_\lambda$ of $\Symset_d$; this the subgroup
  of $\Symset_d$ stabilising the rows of $\stt^\lambda$.
\item The permutation module $M^\lambda$, defined by
\[
   M^\lambda = \ind_{\Symset_\lambda}^{\Symset_d} \Bbbk.
\]
It has a (tabloid) basis \cite{James} indexed by the set of
row-standard tableaux of shape $\lambda$.
\end{enumerate}
Permutation modules (both as left and right modules) play an important
role in what follows. As a left $\Bbbk \Symset_d$-module, it is well
known that $M^\lambda$ is isomorphic to the left ideal $\Bbbk
\Symset_d x_\lambda$, where $x_\lambda$ is defined in the next
paragraph.

Given a row-standard $\lambda$-tableau $\stt$, let $d(\stt)$ be the
unique element of $\Symset_d$ such that $\stt = d(\stt) \stt^\lambda$.
Let $\sgn w$ be the sign of a permutation $w$. Given any pair
$\sts,\stt$ of row-standard $\lambda$-tableaux, following Murphy, we
set
\begin{equation}\label{eq:Murphy-basis}
 x^\lambda_{\sts\stt} = d(\sts)x_\lambda d(\stt)^{-1} ; \quad 
 y^\lambda_{\sts\stt} = d(\sts) y_\lambda d(\stt)^{-1} .
\end{equation}
where $x_\lambda = \sum_{w \in W_\lambda} w$ and $y_\lambda = \sum_{w
  \in W_\lambda} (\sgn w)\,w$. If $\lambda$ is already specified in
context, then we may omit the superscript $\lambda$ from the notation,
writing $x_{\sts\stt}$, $y_{\sts\stt}$ instead of the more cumbersome
$x^\lambda_{\sts\stt}$, $y^\lambda_{\sts\stt}$.  Write $[\stt]$ for
the shape $\lambda$ of a tableau $\stt$. Then $x_{\sts\stt} =
x^\lambda_{\sts\stt}$ where $\lambda = [\sts] = [\stt]$, and similarly
for the $y_{\sts\stt}$.

Graham and Lehrer \cite{Graham-Lehrer} introduced cellular algebras in
order to axiomitise certain common features of certain classes of
finite dimensional algebras. A cellular algebra is an algebra with a
distinguished basis (the cellular basis) indexed by triples
$(\lambda,\sts,\stt)$ where $\lambda$ varies over a poset. The basis
yields canonical pairwise non-isomorphic cell modules
$\Delta(\lambda)$, one for each $\lambda$ in the indexing set.
For each $\lambda$, $\sts$ and $\stt$ belong to an index set in
bijection with a basis of $\Delta(\lambda)$. 

Murphy \cites{Murphy92, Murphy95} found two cellular bases of the
Iwahori--Hecke algebra associated to $\Bbbk \Symset_d$. By
specialising the deformation parameter to $1$, we obtain cellular
bases of $\Bbbk \Symset_d$ as follows.

\begin{thm}[Murphy] \label{thm:Murphy}
  Let $\Bbbk $ be a commutative ring. Each of the two disjoint unions
  \begin{align*}
  \mathcal{X} &= \textstyle\bigsqcup_{\lambda \vdash d}
  \{x_{\sts\stt}\mid \sts,\stt \text{ standard},\ [\sts] = [\stt] =
  \lambda \}, \\ \mathcal{Y} &= \textstyle\bigsqcup_{\lambda \vdash d}
  \{y_{\sts\stt} \mid \sts,\stt \text{ standard},\ [\sts] = [\stt] =
  \lambda \}
  \end{align*}
  is a cellular $\Bbbk$-basis of the group algebra $\Bbbk \Symset_d$,
 with respect to the anti-involution $*$. 
 \color{black}
  The cells are ordered by the
  dominance order $\dom$ (so $\le$ in \cite{Graham-Lehrer} must be
  replaced by $\dom$ here) with the least dominant partition $(1^d)$
  at the top and the most dominant partition $(d)$ at the bottom. The
  cell module $\Delta(\lambda)$ indexed by $\lambda$ is isomorphic to
  the dual Specht module $S_\lambda$ for the $x$-basis and the Specht
  module $S^{\lambda^\tr}$ for the $y$-basis, where $\lambda^\tr$ is
  the transpose of $\lambda$.
\end{thm}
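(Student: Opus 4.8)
The plan is to deduce the statement by specialisation from Murphy's cellularity theorem for the Iwahori--Hecke algebra $\mathcal{H} = \mathcal{H}_{A}(\Sym_d)$ over $A = \Z[q,q^{-1}]$, and then to record explicitly the two points that such a specialisation glosses over: the preservation of a cell datum under base change, and the identification of the cell modules. First I would set up $\mathcal{H}$ with its standard $A$-basis $\{T_w : w \in \Sym_d\}$, the elements $x_\lambda(q) = \sum_{w \in \Sym_\lambda} T_w$, and the Murphy elements $m_{\sts\stt} = T_{d(\sts)}^{*}\, x_\lambda(q)\, T_{d(\stt)}$; Murphy's theorem \cite{Murphy92} asserts that these, as $\sts,\stt$ range over standard tableaux of a common shape $\lambda \vdash d$, form a cellular basis of $\mathcal{H}$ for the anti-involution $T_w \mapsto T_{w^{-1}}$ and the reverse dominance order on $\lambda$. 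The key observation is that the specialisation $A \to \Bbbk$, $q \mapsto 1$, induces an isomorphism $\mathcal{H} \otimes_A \Bbbk \cong \Bbbk\Sym_d$ sending $T_w \otimes 1 \mapsto w$, under which $x_\lambda(q) \mapsto x_\lambda$ and hence $m_{\sts\stt} \otimes 1 \mapsto d(\sts)^{-1} x_\lambda d(\stt) = x_{\sts\stt}$.

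Next I would invoke the fact that a cell datum passes to base changes: since a cellular basis is in particular a free $A$-basis, and since all the structure constants (the multiplication coefficients and the straightening coefficients witnessing the cellular axioms) lie in $A$, applying the ring homomorphism $A \to \Bbbk$ turns $\{m_{\sts\stt} \otimes 1\}$ into a cellular basis of $\mathcal{H} \otimes_A \Bbbk \cong \Bbbk\Sym_d$. This yields at once that $\mathcal{X}$ is a cellular basis of $\Bbbk\Sym_d$ with the stated order. The $*$-compatibility I would verify by hand, as it is immediate: the set $\Sym_\lambda$ is closed under inversion, so $x_\lambda^{*} = x_\lambda$, and therefore $x_{\sts\stt}^{*} = (d(\sts)^{-1} x_\lambda d(\stt))^{*} = d(\stt)^{-1} x_\lambda d(\sts) = x_{\stt\sts}$.

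For the cell modules, the cell module $\Delta(\lambda)$ attached to the $x$-basis is by construction spanned by the images of $\{x_\lambda d(\stt) : \stt \text{ standard},\ [\stt] = \lambda\}$ modulo the span of the more dominant cells; I would identify it with the dual Specht module $S_\lambda$ sitting inside $M^\lambda \cong x_\lambda \Bbbk\Sym_d$ by comparing the $\Sym_d$-actions on the two bases, both of which are indexed by standard $\lambda$-tableaux. The $y$-basis I would then obtain from the $x$-basis through the sign automorphism $\eps \colon w \mapsto (\sgn w)\, w$ of $\Bbbk\Sym_d$: this is an algebra isomorphism carrying $x_\lambda$ to $y_\lambda$, hence $\mathcal{X}$ to $\mathcal{Y}$ up to signs, and it twists every cell module by $\sgn$. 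Since $S_\lambda \otimes \sgn \cong S^{\lambda^\tr}$, this produces the $y$-cellular structure with cell module $S^{\lambda^\tr}$, as claimed.

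The genuine content is hidden inside Murphy's Hecke-algebra theorem, and this is where I expect the real work to be. The crux is the straightening (Garnir) step needed for the multiplication axiom: one must show that for every $w$ the product $x_\lambda(q)\, T_w$ lies in the $A$-span of $\{x_\lambda(q)\, T_{d(\stv)} : \stv \text{ standard}\}$ modulo the span of Murphy elements of shape strictly more dominant than $\lambda$. Were I to give a self-contained argument instead of specialising, this Garnir reduction --- together with the Robinson--Schensted count $\sum_{\lambda \vdash d}(\#\{\stt : \stt \text{ standard},\ [\stt]=\lambda\})^2 = d!$, which promotes ``spanning'' to ``basis'' --- would be the one laborious ingredient. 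Everything else is formal: the $*$-axiom is the one-line computation above, and the independence of the left-multiplication coefficients from the second tableau index follows because left multiplication by $a \in \Bbbk\Sym_d$ only alters the factor $d(\sts)^{-1} x_\lambda$, leaving the trailing $d(\stt)$ untouched.
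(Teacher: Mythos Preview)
Your proposal is correct and follows essentially the same approach as the paper: the paper does not give a detailed proof of this theorem but rather attributes it to Murphy \cites{Murphy92,Murphy95}, remarks that one obtains the result for $\Bbbk\Symset_d$ by specialising the deformation parameter of the Hecke algebra to $1$, and then observes (immediately after the statement) that the sign involution $w \mapsto (\sgn w)\,w$ interchanges the $x$- and $y$-bases and hence converts results about one basis into results about the other. Your write-up expands exactly these two steps --- base change of the cell datum along $q \mapsto 1$, and transport via the sign automorphism --- and adds the identification of the cell modules and the Robinson--Schensted cardinality check, which the paper leaves implicit by citing \cite{Doty:envelope} for further details.
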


Note that $x_{\sts\stt}$ and $y_{\sts\stt}$ are interchanged by the
$\Bbbk$-linear algebra involution $\#$ of $\Bbbk \Symset_d$ defined on
basis elements by $w \mapsto (\sgn w) w$, for $w \in \Symset_d$.
This involution converts results about one basis into results about
the other.

\begin{rmk}
We need to distinguish notationally between two symmetric groups in
this paper: $W_n \cong \Symset_n$ and $\Sym_r \cong \Symset_r$. We
write maps on the left in the former, on the right in the
latter. Thus, we compose from right-to-left in $W_n$ and from
left-to-right in $\Sym_r$. These groups act on $\bV^{\otimes r}$ on
the left and right by value and place-permutation, respectively. 
 We will make this   explicit  in the next section.   \color{black} 
Any
notation applicable to $\Symset_n$ will be extended to both $W_n$ and
$\Sym_r$; in particular we have the Young subgroups $W_\lambda$ and
$\Sym_\mu$ for any weak compositions $\lambda$, $\mu$ of $n$, $r$
respectively.  
\end{rmk}

\section{The $(\Bbbk W_n, \Ptn_r(n))$-bimodule $\bV^{\otimes r}$}%
\label{sec:bimod}\noindent
For the rest of the paper we fix a free $\Bbbk$-module $\bV$ of rank
$n$, with a given $\Bbbk$-basis $\{\bv_1, \dots, \bv_n\}$, where
$\Bbbk$ is a commutative ring.  We identify $\bV$ with $\Bbbk^n$ by
taking coordinates in the basis. For any positive integer $r$, the set
\begin{equation}\label{eq:TS-basis}
\{ \bv_{i_1} \otimes \cdots \otimes \bv_{i_r} \mid i_1,\dots, i_r = 1,
\dots, n \}
\end{equation}
is a basis of the $r$th tensor power $\bV^{\otimes r}$. The general
linear group $\GL(\bV)$ of $\Bbbk$-linear automorphisms of $\bV$ acts
naturally on the left on $\bV$; this action extends diagonally to an
action on $\bV^{\otimes r}$. The symmetric group $\Sym_r$ acts on the
right on $\bV^{\otimes r}$ by permuting the tensor positions; this
action is known as the \emph{place-permutation} action, defined by
\begin{equation}\label{eq:PP-action}
(\bv_{i_1} \otimes \cdots \otimes \bv_{i_r})^\sigma = \bv_{i_{1\sigma^{-1}}}
\otimes \cdots \otimes \bv_{i_{r \sigma^{-1}}} \, , \text{ for } \sigma \in
\Sym_r
\end{equation}
extended linearly. Note that we write maps in $\Sym_r$ on the
\emph{right} of their arguments, so that we may compose permutations
in $\Sym_r$ from left-to-right.
 The actions of the groups $\GL(\bV)$, $\Sym_r$ on $\bV^{\otimes r}$
commute, thus their linear extensions to the group algebras makes
$\bV^{\otimes r}$ into a $(\Bbbk \GL(\bV), \Bbbk \Sym_r)$-bimodule.

Basis elements $\bv_{i_1} \otimes \cdots \otimes \bv_{i_r}$ of
$\bV^{\otimes r}$ are indexed by multi-indices $(i_1, \dots, i_r)$ in
the set
\[
I(n,r) = \{1, \dots, n \}^r.
\]
In the sequel, we will be careful to distinguish between values
$i_\alpha$ in $\{1, \dots, n\}$ and places $\alpha$ in $\{1, \dots,
r\}$. For example, the multi-index $(2,7,7,6,2)$ takes the value $7$
in places $2,3$, the value $2$ in places $1,5$, and the value $6$ in
place $4$. In general, we will use Latin letters such as $i, j$ to
denote values and Greek letters such as $\alpha, \beta$ to denote
places.

Let $W_n$ be the Weyl group of $\GL(\bV)$, i.e., the group of elements
of $\GL(\bV)$ permuting the basis $\{\bv_1, \dots, \bv_n\}$.  We identify
$W_n$ with the group of permutation matrices, regarded as matrices
with entries from $\Bbbk$. By restricting the action of $\GL(\bV)$ to
$W_n$, we obtain a left action of $W_n$ on $\bV^{\otimes r}$. To be
explicit, $w \in W_n$ acts by
\begin{equation}\label{eq:W_n-action}
  w (\bv_{j_1} \otimes \cdots \otimes \bv_{j_r}) = \bv_{w(j_1)} \otimes
  \cdots \otimes \bv_{w(j_r)}.
\end{equation}
Note that we write maps on the \emph{left} of their arguments when
considering this action.
 Extended linearly, the action of $W_n$ defines a linear representation
$\Bbbk W_n \to \End_\Bbbk( \bV^{\otimes r} )$ of the group algebra
$\Bbbk W_n$.


For a positive integer $r$, and any $\delta\in\Bbbk$, we let
$\Ptn_r(\delta)$ denote the $\Bbbk$-module with basis given by all
set-partitions of $\{1,2,\ldots, r, 1',2', \ldots, r'\}$. By a
\emph{set-partition} we mean a pairwise disjoint covering of the
set.   An element  of a set-partition is called a \emph{block}.  For
example,
\[
d=\{\{1, 2, 4, 2', 5'\}, \{3\}, \{5, 6, 7, 3', 4', 6', 7'\}, \{8,
8'\}, \{1'\}\}
\]
is a set-partition of the set $\{1, \dots, 8, 1', \dots,
8' \}$ with five blocks.

We can depict each set-partition by a \emph{partition diagram},
consisting of $r$ northern nodes indexed by $1,2, \ldots ,r$ and $r$
southern nodes indexed by $1', 2', \ldots , r'$, with edges between
nodes, such that the nodes in the connected components give the blocks
of the set-partition. In general there are many partition diagrams
depicting a given set-partition, 
which we identify as equivalent.  

We define the product $x \cdot y$ of two diagrams $x$ and $y$ by
stacking $x$ above $y$, where we identify the southern nodes of $x$
with the northern nodes of $y$; these identified nodes then become the
middle nodes.  If there are $m$ connected middle components, then the
product $xy$ is set equal to $\delta^m$ times the diagram obtained 
by 
deleting the middle components (including middle vertices).  An
example is given in Figure \ref{fig:partmult}.
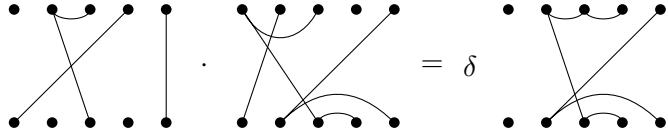
\begin{figure}[h!]
	\centering
	\begin{tikzpicture}[scale=0.5]
	 \foreach \x in {1,...,5,7,8,...,11,14,15,...,18}
	 	{\fill[black] (\x,0) circle (4pt) coordinate [below] (hi\x); 
	 	\fill[black] (\x,3) circle (4pt) coordinate [above] (2hi\x);}
		
		\draw(hi1) node[below]{\scriptsize $1'$};
	 	\draw(2hi1) node[above] {\scriptsize $1$};
		\draw(hi2) node[below] {\scriptsize $2'$};
	 	\draw(2hi2) node[above] {\scriptsize $2$};
	\draw(hi3) node[below] {\scriptsize $3'$};
	 	\draw(2hi3) node[above] {\scriptsize $3$};
	\draw(hi4) node[below] {\scriptsize $4'$};
	 	\draw(2hi4) node[above] {\scriptsize $4$};
	\draw(hi5) node[below] {\scriptsize $5'$};
	 	\draw(2hi5) node[above] {\scriptsize $5$};

		\draw(hi7) node[below] {\scriptsize $1'$};
	 	\draw(2hi7) node[above] {\scriptsize $1$};
		\draw(hi8) node[below] {\scriptsize $2'$};
	 	\draw(2hi8) node[above] {\scriptsize $2$};
	\draw(hi9) node[below] {\scriptsize $3'$};
	 	\draw(2hi9) node[above] {\scriptsize $3$};
	\draw(hi10) node[below] {\scriptsize $4'$};
	 	\draw(2hi10) node[above] {\scriptsize $4$};
	\draw(hi11) node[below] {\scriptsize $5'$};
	 	\draw(2hi11) node[above] {\scriptsize $5$};

		\draw(hi14) node[below] {\scriptsize $1'$};
	 	\draw(2hi14) node[above] {\scriptsize $1$};
		\draw(hi15) node[below] {\scriptsize $2'$};
	 	\draw(2hi15) node[above] {\scriptsize $2$};
	\draw(hi16) node[below] {\scriptsize $3'$};
	 	\draw(2hi16) node[above] {\scriptsize $3$};
	\draw(hi17) node[below] {\scriptsize $4'$};
	 	\draw(2hi17) node[above] {\scriptsize $4$};
	\draw(hi18) node[below] {\scriptsize $5'$};
	 	\draw(2hi18) node[above] {\scriptsize $5$};

	\draw (3,3) arc (0:-180:0.5 and 0.25) -- (3,0);\draw (4,3)--(1,0);\draw (5,3)--(5,0);
	\draw (6,1.5) node {$\cdot$};
	\draw (9,3) .. controls (8.5,2) and (7.5,2) .. (7,3) -- (9,0) arc (180:0:0.5 and 0.25);
	\draw (8,3)--(7,0);
	\draw (11,0) .. controls (10,1) and (9,1) .. (8,0) -- (11,3);
	\draw (12,1.5) node {$=$};\draw (13,1.5) node {$\delta$};
	\draw (18,0) .. controls (17,1) and (16,1) .. (15,0) -- (18,3);
	\draw (17,3) arc (0:-180:0.5 and 0.25) arc (0:-180:0.5 and 0.25) -- (16,0) arc (180:0:0.5 and 0.25);
	\end{tikzpicture}
	\caption{Multiplication of two diagrams in $\Ptn_5 (\delta)$.}
\label{fig:partmult}
\end{figure}
Extending the product linearly defines a multiplication on
$\Ptn_r(\delta)$, making it an associative algebra. Note that $\Bbbk
\Sym_r \subset \Ptn_r(\delta)$ is the subalgebra spanned by the
\emph{permutation diagrams}, the diagrams depicting set-partitions
with $r$ blocks, each of which contains exactly one element of $\{1,
\dots, r\}$ and one of $\{1', \dots, r'\}$.

To obtain an action of the partition algebra on $\bV^{\otimes r}$ it
is necessary to specialise $\delta$ to $n = \rank_\Bbbk
\bV$. Following \cite{Halverson-Ram}, we define a generalised
Kronecker delta symbol $(d)^{i_1, \dots, i_r}_{i_{1'}, \dots, i_{r'}}$
corresponding to a diagram $d$ and any $(i_1, \dots, i_r)$, $(i_{1'},
\dots, i_{r'})$ in $I(n,r)$, by
\[
(d)^{i_1, \dots, i_r}_{i_{1'}, \dots, i_{r'}} = 
\begin{cases}
  1 & \text{if } i_\alpha = i_\beta \text{ whenever } \alpha \ne
  \beta \text{ are in the same block of } d\\ 0 & \text{otherwise}.
\end{cases}
\]
Then the diagram $d$ acts on $\bV^{\otimes r}$, on the right, by
the rule
\begin{equation}\label{eq:Ptn-action}
(\bv_{i_1} \otimes \cdots \otimes \bv_{i_r})^d = \sum_{(i_{1'}, \dots,
    i_{r'}) \in I(n,r)} (d)^{i_1, \dots, i_r}_{i_{1'}, \dots,
    i_{r'}} \, (\bv_{i_{1'}} \otimes \cdots \otimes \bv_{i_{r'}}).
\end{equation}
Extended linearly, this action defines a linear representation
$\Ptn_r(n)^{\op} \to \End_\Bbbk(\bV^{\otimes r})$.  If $d \in \Sym_r$
is a permutation diagram, then $d$ acts by the place-permutation
action defined in \eqref{eq:PP-action}. The actions of $W_n$ and
$\Ptn_r(n)$ defined in \eqref{eq:W_n-action} and \eqref{eq:Ptn-action}
commute, thus we have a $(\Bbbk W_n, \Ptn_r(n))$-bimodule structure on
$\bV^{\otimes r}$.

\section{The $(\Bbbk W_{n-1}, \Ptn_{r+\,1/2}(n))$-bimodule $\bV^{\otimes r}$}%
\label{sec:bimod-restr}\noindent
Let $\Ptn_{r+\,1/2}(\delta)$ denote the submodule of
$\Ptn_{r+1}(\delta)$ with $\Bbbk$-basis given by all set-partitions
such that $r+1$ and $(r+1)'$ belong to the same block.  The submodule
$\Ptn_{r+\,1/2}(\delta)$ is closed under the multiplication and
therefore is a subalgebra of $\Ptn_{r+1}(\delta)$.  The
$\Bbbk$-submodule $\bV^{\otimes r}\otimes \bv_n \subset \bV^{\otimes
  (r+1) }$ is stable under the action of
$\Ptn_{r+\,1/2}(n)$. Therefore, by identifying $\bV^{\otimes r}$ with
\[
\bV^{\otimes r}\otimes \bv_n \subset \bV^{\otimes (r+1)}
\]
we regard $\bV^{\otimes r}$ as a right $\Ptn_{r+\,1/2}(n)$-module. We
also regard it as a left $\Bbbk W_{n-1}$-module by restriction from
$\Bbbk W_n$, where
\[
W_{n-1} = \{w \in W_n\mid w(n) = n \}.
\]
Thus, after identifying $\bV^{\otimes r}$ with $\bV^{\otimes r} \otimes
\bv_n$, we have a $(\Bbbk W_{n-1}, \Ptn_{r+\,1/2}(n))$-bimodule
structure on $\bV^{\otimes r}$.

\section{Decompositions of $\bV^{\otimes r}$}
\noindent
Henceforth we study $\bV^{\otimes r}$ as left $\Bbbk W_n$-module and
also as left $\Bbbk W_{n-1}$-module (subject to the identification of
$\bV^{\otimes r}$ with $\bV^{\otimes r} \otimes \bv_n$, discussed in
Section \ref{sec:bimod-restr}).
We write
\[
\Phi_{n,r} : \Bbbk W_n \to \End_{\Bbbk}(\bV^{\otimes r}), \quad
\Phi_{n,r+\,1/2} : \Bbbk W_{n-1} \to \End_{\Bbbk}(\bV^{\otimes r})
\]
for the $\Bbbk$-linear representations corresponding to the left
actions in the two bimodule structures on $\bV^{\otimes r}$.  Our goal
is to understand the annihilator of each action. In this section we
obtain direct sum decompositions of $\bV^{\otimes r}$, as both left
$\Bbbk W_n$ and $\Bbbk W_{n-1}$-modules, but first we record the
following elementary fact.

\begin{lem}\label{lem:faithful}
  If $r \ge n$ then the representation $\Phi_{n,r}: \Bbbk W_n
  \to \End_\Bbbk(\bV^{\otimes r})$ is faithful. If $r+1 \ge n$ then
  the representation $\Phi_{n,r+\, 1/2}: \Bbbk W_{n-1}
  \to \End_\Bbbk(\bV^{\otimes r} \otimes \bv_n)$ is faithful.
\end{lem}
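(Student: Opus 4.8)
The plan is to prove faithfulness directly, by exhibiting in each case a single ``separating'' tensor on which distinct group elements act as distinct basis vectors. The only property of $\Bbbk$ I would use is that $\bV^{\otimes r}$ is free with basis \eqref{eq:TS-basis}, so that distinct basis vectors are automatically $\Bbbk$-linearly independent; no further hypothesis on $\Bbbk$ is needed.

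For the first assertion I would assume $r \ge n$ and consider the basis vector
\[
\mathbf{e} = \bv_1 \otimes \bv_2 \otimes \cdots \otimes \bv_n \otimes \bv_n \otimes \cdots \otimes \bv_n \in \bV^{\otimes r},
\]
in which the first $n$ tensor positions carry the distinct values $1, \dots, n$ and the remaining $r-n$ positions are filled arbitrarily (say with $\bv_n$). By \eqref{eq:W_n-action}, for $w \in W_n$ we have $w\,\mathbf{e} = \bv_{w(1)} \otimes \cdots \otimes \bv_{w(n)} \otimes \cdots$, which is again a basis vector from \eqref{eq:TS-basis}. Since $w \in W_n \cong \Symset_n$ is completely determined by the tuple $(w(1), \dots, w(n))$, the map $w \mapsto w\,\mathbf{e}$ is injective, so the vectors $\{w\,\mathbf{e} : w \in W_n\}$ are pairwise distinct and hence $\Bbbk$-linearly independent. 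Then if $\sum_{w} a_w\, \Phi_{n,r}(w) = 0$ with $a_w \in \Bbbk$, applying the operator to $\mathbf{e}$ and reading off coefficients in the basis \eqref{eq:TS-basis} forces every $a_w = 0$, which is exactly the injectivity of $\Phi_{n,r}$.

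For the half-partition statement I would assume $r+1 \ge n$, i.e.\ $r \ge n-1$. Here $W_{n-1} = \{w \in W_n : w(n) = n\}$, and $\Phi_{n,r+\,1/2}(w)$ acts diagonally on $\bV^{\otimes r} \otimes \bv_n$, fixing the final tensorand $\bv_n$ because $w(n) = n$. I would take
\[
\mathbf{e} = \bv_1 \otimes \cdots \otimes \bv_{n-1} \otimes \bv_n \otimes \cdots \otimes \bv_n \otimes \bv_n,
\]
placing the distinct values $1, \dots, n-1$ in the first $n-1$ positions of the $\bV^{\otimes r}$ factor (possible since $r \ge n-1$) and appending the fixed $\bv_n$. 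For $w \in W_{n-1} \cong \Symset_{n-1}$, the vector $w\,\mathbf{e}$ equals $\bv_{w(1)} \otimes \cdots \otimes \bv_{w(n-1)} \otimes \cdots \otimes \bv_n$, and since $w$ is determined by $(w(1), \dots, w(n-1))$ the map $w \mapsto w\,\mathbf{e}$ is again injective. The same linear-independence argument then yields injectivity of $\Phi_{n,r+\,1/2}$.

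The argument is elementary and I do not anticipate a genuine obstacle. The only point requiring care is the bookkeeping: one must check that $\bV^{\otimes r}$ has enough tensor positions to accommodate the distinct values $1, \dots, n$ (respectively $1, \dots, n-1$) needed to separate the group elements, and this is precisely what the hypotheses $r \ge n$ (respectively $r \ge n-1$) guarantee.
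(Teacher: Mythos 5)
Your proposal is correct and is essentially the paper's own argument: the paper likewise applies a purported kernel element to the simple tensor $\bv_1\otimes\cdots\otimes\bv_n$ padded by $\bv_n^{\otimes(r-n)}$, uses the $\Bbbk$-linear independence of the resulting distinct basis vectors to force all coefficients to vanish, and handles the half case "similarly." The only difference is cosmetic (the paper treats $r=n$ first and then pads, and leaves the second claim to the reader, which you spell out).
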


\begin{proof}
Suppose that $a = \sum_{w \in W_n} a_w \, w$ belongs to the kernel of
$\Phi_{n,r}$. Then $a$ acts as zero on all elements of $\bV^{\otimes
  r}$. First suppose that $r = n$. Consider the simple tensor $v =
\bv_1 \otimes \bv_2 \otimes \cdots \otimes \bv_n$. We have
\[
w \cdot v = \bv_{w(1)} \otimes \bv_{w(2)} \otimes \cdots \otimes \bv_{w(n)}
\]
and each $w \cdot v$ is a simple tensor obtained from $v$ by
permuting its factors according to $w$. In particular, the set $\{ w
\cdot v\mid w \in W_n\}$ is linearly independent over $\Bbbk$. Thus the
fact that $a \cdot v = 0$ implies that
\[
\textstyle \sum_{w \in W_n} a_w \, (\bv_{w(1)} \otimes \bv_{w(2)} \otimes
\cdots \otimes \bv_{w(n)}) = 0.
\]
By linear independence, this forces $a_w = 0$ for all $w \in W_n$;
that is, $a = 0$. This proves the first claim in case $r=n$.

For the general case, $r \ge n$, replace $v$ by $v \otimes
\bv_n^{\otimes (r-n)}$ and repeat the argument. This proves the first
claim. The proof of the second claim is similar.
\end{proof}

\begin{rmk}\label{rmk:sharper}
Lemma \ref{lem:faithful} will be sharpened in Corollary
\ref{cor:faithful}. The preceding argument can be modified to prove
the sharpened result; we leave the details to the interested reader.
\end{rmk}

Although not needed in this paper, for purposes of comparison we
recall the standard multiplicity-free decomposition of $\bV^{\otimes
  r}$ as a right $\Bbbk \Sym_r$-module:
\begin{equation}
  \textstyle \bV^{\otimes r} \cong \bigoplus_{\lambda} M^\lambda
\end{equation}
where the sum is over all weak compositions of $r$ of length at most
$n$. To see this, observe that each $M^\lambda$ may be identified with
the weight space $\bV^{\otimes r}_\lambda$ consisting of all tensors
of weight $\lambda$ for the action of the diagonal torus $T \subset
\GL(\bV)$ (elements of $\GL(\bV)$ acting diagonally on the basis
vectors $\bv_i$). The identification $\bV^{\otimes r}_\lambda \cong
M^\lambda$ is given on basis elements by
\begin{equation}\label{eq:wt-space-ident}
\bv_{i_1} \otimes \cdots \otimes \bv_{i_r} \mapsto \stt(i_1, \dots, i_r),
\end{equation}
where $\stt(i_1, \dots, i_r)$ is the row-standard $\lambda$-tableau
whose $j$th row contains all the tensor places in which $\bv_j$ appears.


 We now provide a language for  decomposing $\bV^{\otimes r}$, both as a
left $\Bbbk W_n$ and $\Bbbk W_{n-1}$-module.  
The direct summands of this decomposition will be labelled by {\em hook partitions}, and this will be   important in what follows.  
 The combinatorics used  here is completely different to that of  the characteristic zero $(\Bbbk W_{n-1}, \Ptn_{r+\,1/2}(n))$-bimodule decomposition, instead it uses the language from Section \ref{sec:comb} (and ultimately goes back to ideas from \cite{James}).  
 
\color{black}

\begin{defn}\label{def:value-type}
  The \emph{value-type} of a multi-index $(i_1, \dots, i_r)$ in
  $I(n,r)$ is the set-partition $\Lambda$ of $\{1, \dots, r\}$ defined
  by $\Lambda = \{ \Lambda_1, \dots, \Lambda_n \}$, where $\Lambda_j$
  is the set of places $\alpha = 1, \dots r$ such that $i_\alpha =
  j$. By convention, we usually omit any empty subsets $\Lambda_j$
  from $\Lambda$. The non-empty subsets in $\Lambda$ are called
  \emph{parts}; their number is denoted by $\ell(\Lambda)$ and is
  called the \emph{length} of $\Lambda$.
\end{defn}

The non-empty parts of the value-type $\Lambda$ associated to $(i_1,
\dots, i_r)$ are the subsets given by the non-empty rows of the
associated tableau $\stt(i_1, \dots, i_r)$ defined in
\eqref{eq:wt-space-ident}. For instance, the value-type of the
multi-index
\[
(i_1, \dots, i_7) = (9,8,8,1,9,8,1)
\]
is $\Lambda = \{\{1,5\}, \{2,3,6\}, \{4,7\}\}$, a set-partition of
$\{1, \dots, 7\}$ with three parts.

We remark that value-type may be visualised as a half-diagram, in which nodes are linked by edges if and only if they have the same value in the multi-index (although we do not use this visualisation in the paper).  

\begin{defn}\label{def:M-and-N}
  Let $\Lambda$ be a set-partition of $\{1, \dots, r\}$ with not more
  than $n$ parts. We define $\bV(\Lambda)$ to be the $\Bbbk$-span of the
  tensors $\bv_{i_1} \otimes \cdots \otimes \bv_{i_r}$ such that the
  value-type of $(i_1, \dots, i_r)$ is equal to $\Lambda$.

  Similarly, for each set-partition $\Lambda'$ of $\{1, \dots, r+1\}$
  with not more than $n$ parts, we define $\bV'(\Lambda')$ to be the
  $\Bbbk$-span of the tensors $\bv_{i_1} \otimes \cdots \otimes \bv_{i_r}
  \otimes \bv_n$ such that the value-type of $(i_1, \dots, i_{r}, n)$ is
  equal to $\Lambda'$.
\end{defn}

Both $\bV(\Lambda)$, $\bV'(\Lambda')$ are free modules over the
commutative ring $\Bbbk$.  It is useful to understand how
$\bV'(\Lambda')$ is related to $\bV(\Lambda')$. Of course we have
$\bV'(\Lambda') \subset \bV(\Lambda')$ by definition. To be more
specific, we have the following result.

\begin{lem}\label{lem:M-and-N}
  For any set-partition $\Lambda'$ of $\{1, \dots, r+1\}$ with not
  more than $n$ parts, $\rank_\Bbbk \bV'(\Lambda') = \frac{1}{n} \,
  \rank_\Bbbk \bV(\Lambda')$.
\end{lem}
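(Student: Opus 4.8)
The plan is to reduce the rank identity to a bijective count among explicit bases of simple tensors, and then to exploit the transitivity of the value-permutation action of $W_n$ on the last tensor factor.

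First I would record that both $\bV(\Lambda')$ and $\bV'(\Lambda')$ are free $\Bbbk$-modules: by Definition \ref{def:M-and-N} each is the $\Bbbk$-span of a subset of the tensor basis \eqref{eq:TS-basis} (now taken in $\bV^{\otimes(r+1)}$), and distinct simple tensors are linearly independent. Hence $\rank_\Bbbk \bV(\Lambda')$ equals the number of multi-indices $(i_1, \dots, i_{r+1}) \in I(n, r+1)$ whose value-type is $\Lambda'$, while $\rank_\Bbbk \bV'(\Lambda')$ equals the number of such multi-indices that additionally satisfy $i_{r+1} = n$. Writing $B$ for the set of basis tensors of $\bV(\Lambda')$, the basis of $\bV'(\Lambda')$ is precisely the subset $B_n \subseteq B$ of those tensors whose final factor is $\bv_n$. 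It therefore suffices to show that $|B| = n\,|B_n|$.

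Next I would partition $B$ according to the value in the last place. For each $v \in \{1, \dots, n\}$ let $B_v \subseteq B$ consist of the tensors with $i_{r+1} = v$, so that $B = \bigsqcup_{v=1}^{n} B_v$ and $B_n$ is the block indexed by $v = n$. The crux is that all these blocks are equinumerous. To see this, fix $v$ and let $w \in W_n$ be the transposition interchanging $v$ and $n$. The action \eqref{eq:W_n-action} relabels every tensor factor by $w$, and since $w$ is a bijection of $\{1, \dots, n\}$ it preserves value-type: two places carry equal values before applying $w$ if and only if they do afterwards. Consequently $w$ permutes $B$, and it carries $B_v$ bijectively onto $B_n$ since it sends the last value $v$ to $n$. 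Thus $|B_v| = |B_n|$ for every $v$, whence $|B| = \sum_{v=1}^{n} |B_v| = n\,|B_n|$, as required.

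The only point requiring genuine care is exactly this invariance of value-type under the $W_n$-action, and I would isolate it as the key step: because $W_n$ acts by a bijective relabelling of values, the set-partition of places recording coincidences of values is left unchanged, so each $w \in W_n$ permutes $B$. As a sanity check one may compute the cardinalities directly: if $\ell = \ell(\Lambda')$, then $|B|$ is the number of injections of the $\ell$ parts into $\{1, \dots, n\}$, namely $n!/(n-\ell)!$, whereas fixing the part containing $r+1$ to receive the value $n$ gives $|B_n| = (n-1)!/(n-\ell)!$; the hypothesis $\ell \le n$ ensures these are nonzero, and their ratio is $1/n$. I do not anticipate any obstacle beyond this verification.
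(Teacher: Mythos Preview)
Your proof is correct and follows essentially the same approach as the paper: both partition $\bV(\Lambda')$ according to the value appearing in place $r+1$ and use the transpositions $(j,n) \in W_n$ (which preserve value-type) to biject each piece with $\bV'(\Lambda')$. The paper phrases this via maps $f_j: \bV'(\Lambda') \to \bV(\Lambda')$ whose images give a direct sum decomposition, whereas you phrase it as an equinumerosity of basis blocks $B_v$; your added explicit count $|B| = n!/(n-\ell)!$, $|B_n| = (n-1)!/(n-\ell)!$ is a nice confirmation not present in the paper.
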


\begin{proof}
Consider a simple tensor of the form $v = \bv_{i_1} \otimes \cdots
\otimes \bv_{i_r} \otimes \bv_n$ in $\bV'(\Lambda')$. Let $\Lambda'_n$
be the part of $\Lambda'$ recording the places in $v$ at which $\bv_n$
appears (so of course $r+1 \in \Lambda'_n$). For any $j = 1, \dots,
n-1$ the tensor
\[
s \cdot v = s \cdot (\bv_{i_1} \otimes \cdots \otimes \bv_{i_r} \otimes
\bv_n) 
\]
belongs to $\bV(\Lambda')$, where $s \in W_n$ is the transposition $s =
(j,n)$. For each $j=1, \dots, n-1$, the map $f_j$ from $\bV'(\Lambda')$
into $\bV(\Lambda')$ defined on simple tensors by $v \mapsto s(v)$, then
extended linearly, is injective. Furthermore, the image of each $f_j$
gives an isomorphic copy of $\bV'(\Lambda')$ inside
$\bV(\Lambda')$. Finally, the embedding $f_n$ of $\bV'(\Lambda')$ in
$\bV(\Lambda')$ defined by $v \mapsto v$ gives another isomorphic copy
of $\bV'(\Lambda')$ in $\bV(\Lambda')$. Since $\bV(\Lambda')$ is equal to
the direct sum of the images of the $n$ maps $f_1, \dots, f_n$, it
follows that $\rank_\Bbbk \bV(\Lambda') = n \cdot \rank_\Bbbk
\bV'(\Lambda')$.
\end{proof}

\begin{rmk}
It is straightforward  to check that $\bV(\Lambda') \cong \ind_{W_{n-1}}^{W_n}
\bV'(\Lambda')$, the module obtained by inducing $\bV'(\Lambda')$ from
$W_{n-1}$ to $W_n$.
\end{rmk}
Note that, by definition, simple tensors in $\bV'(\Lambda')$ always have
the fixed vector $\bv_n$ appearing in place $r+1$. We also note that
whenever $\Lambda$ has more than $n$ parts there are no simple tensors
in $\bV^{\otimes r}$ of value-type $\Lambda$, so $\bV(\Lambda) = 0$. On
the other hand, it is easy to see that $\bV(\Lambda) \ne 0$ if
$\ell(\Lambda) \le n$. It follows from Lemma \ref{lem:M-and-N} that
$\bV'(\Lambda') \ne 0$ if and only if $\ell(\Lambda') \le n$.

Here then are the promised decompositions.

\begin{prop}\label{prop:decomposition}
  The action of $W_n$ on $\bV^{\otimes r}$ preserves value-type of
  simple tensors, so $\bV(\Lambda)$ is a left $\Bbbk
  W_n$-module. Similarly, $\bV'(\Lambda')$ is a left $\Bbbk
  W_{n-1}$-module. Furthermore, we have direct sum decompositions
  \begin{equation*}
    \textstyle \bV^{\otimes r} = \bigoplus_{\ell(\Lambda) \le n}
    \bV(\Lambda), \quad \bV^{\otimes r} \otimes \bv_n =
    \bigoplus_{\ell(\Lambda') \le n} \bV'(\Lambda')
  \end{equation*}
  where $\Lambda$, $\Lambda'$ vary over all set-partitions (with not
  more than $n$ parts) of $\{1, \dots, r\}$, $\{1, \dots, r+1\}$,
  respectively. The decompositions are multiplicity-free, in the sense
  that each $\bV(\Lambda)$, $\bV'(\Lambda')$ appears exactly once in the
  direct sum.
\end{prop}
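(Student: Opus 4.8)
The plan is to show that value-type is a combinatorial invariant preserved by the relevant group actions, and then to obtain the decompositions by partitioning the standard basis of $\bV^{\otimes r}$ according to value-type.

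First I would isolate the core observation behind the two submodule claims. Fix $w \in W_n$ and a simple tensor $\bv_{i_1} \otimes \cdots \otimes \bv_{i_r}$ of value-type $\Lambda$. By \eqref{eq:W_n-action} the action sends it to $\bv_{w(i_1)} \otimes \cdots \otimes \bv_{w(i_r)}$, so the value at each place $\alpha$ passes from $i_\alpha$ to $w(i_\alpha)$. Since $w$ is a bijection of $\{1, \dots, n\}$, one has $i_\alpha = i_\beta$ if and only if $w(i_\alpha) = w(i_\beta)$, so the partition of $\{1, \dots, r\}$ into blocks of places carrying a common value is unchanged. This shows $w$ preserves value-type, so $\bV(\Lambda)$ is a left $\Bbbk W_n$-submodule.

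The case of $\bV'(\Lambda')$ is where I expect the only real care to be needed, and I would handle it by exploiting that every $w \in W_{n-1}$ fixes $n$. Acting on $\bv_{i_1} \otimes \cdots \otimes \bv_{i_r} \otimes \bv_n$ yields $\bv_{w(i_1)} \otimes \cdots \otimes \bv_{w(i_r)} \otimes \bv_n$, since $w(n) = n$, so the result remains in $\bV^{\otimes r} \otimes \bv_n$. The augmented multi-index $(i_1, \dots, i_r, n)$ is sent to $(w(i_1), \dots, w(i_r), n)$; equalities among values are preserved as before, and because $w(i_\alpha) = n$ forces $i_\alpha = n$, the distinguished block containing place $r+1$ is carried to itself rather than being merged with or split from the others. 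Hence the value-type of the augmented index is unchanged and $\bV'(\Lambda')$ is a left $\Bbbk W_{n-1}$-module.

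It then remains to record the decompositions, which are essentially bookkeeping. Each simple tensor in the basis \eqref{eq:TS-basis} has a unique value-type, so this basis is partitioned into the blocks spanning the various $\bV(\Lambda)$; this immediately gives $\bV^{\otimes r} = \bigoplus_\Lambda \bV(\Lambda)$ with $\Lambda$ ranging over the set-partitions that actually occur. A set-partition of $\{1, \dots, r\}$ occurs as a value-type exactly when it has at most $n$ parts, since one may then assign distinct values from $\{1, \dots, n\}$ to its parts; this matches the observation that $\bV(\Lambda) \ne 0$ precisely when $\ell(\Lambda) \le n$. I would run the identical argument for the basis of $\bV^{\otimes r} \otimes \bv_n$, partitioned by the value-type of $(i_1, \dots, i_r, n)$, where the realizable set-partitions are those with $\ell(\Lambda') \le n$ (assign $n$ to the block containing $r+1$ and distinct values to the remaining at most $n-1$ parts), in agreement with Lemma \ref{lem:M-and-N}. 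Since each set-partition indexes a single summand, both decompositions are automatically multiplicity-free.
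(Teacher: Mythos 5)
Your proposal is correct and fills in exactly the argument the paper leaves as "easily verified": the action of $W_n$ permutes values bijectively, hence preserves the set-partition of places by common value, and the standard basis is partitioned by value-type, yielding the multiplicity-free direct sums. This is the same approach as the paper, which merely observes that value-type classifies the $W_n$-orbits of simple tensors.
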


\begin{proof}
All the claims are easily verified. The point is that the
classification of simple tensors by value-type describes the orbits of
simple tensors under the left action of $W_n$. The claims for
$W_{n-1}$ are just variations on this theme.
\end{proof}

The problem with the decompositions in Proposition
\ref{prop:decomposition} is that the summands are not pairwise
non-isomorphic (as $\Bbbk W_n$ or $\Bbbk W_{n-1}$-modules). By looking
at examples one quickly observes that $\bV(\Lambda)$ depends, up to
isomorphism, only on the number of parts $\ell(\Lambda)$ and not on
$\Lambda$ itself.  Similar remarks apply to the $\bV'(\Lambda')$ in
the second decomposition.

To overcome this difficulty, we introduce minimal prototypes for the
isomorphism classes that can occur as summands in the above
decompositions of tensor space.

\begin{defn}
For any $l = 1, \dots, n$ let $H_n(l)$ be the $\Bbbk W_n$-submodule of
$\bV^{\otimes l}$ generated by $\bv_{n-l+1}\otimes \bv_{n - l + 2} \otimes \cdots \otimes
\bv_n$. 
\end{defn}

Here are some elementary properties of the $H_n(l)$.

\begin{lem}\label{lem:H-isos}
As left $\Bbbk W_n$-modules, we have isomorphisms:
\begin{enumerate}\renewcommand{\labelenumi}{(\alph{enumi})}
\item $H_n(l) \cong M^{(n-l, 1^l)}$, for any $l = 1, \dots, n-1$.
\item $H_n(n) \cong M^{(1^n)} \cong \Bbbk W_n$.
\item $H_n(n-1) \cong H_n(n)$.
\end{enumerate}
\end{lem}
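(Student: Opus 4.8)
The plan is to recognise each $H_n(l)$ as a transitive permutation module and then to compute the relevant point stabiliser via orbit--stabiliser; all three isomorphisms fall out of this single observation.

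First I would describe the $W_n$-orbit of the cyclic generator $u_l = \bv_{n-l+1} \otimes \cdots \otimes \bv_n$ of $H_n(l)$. Since $w$ acts on values by \eqref{eq:W_n-action}, we have $w \cdot u_l = \bv_{w(n-l+1)} \otimes \cdots \otimes \bv_{w(n)}$, and as $w$ runs over $W_n \cong \Symset_n$ the tuple $(w(n-l+1), \dots, w(n))$ runs over all ordered $l$-tuples of pairwise distinct values in $\{1, \dots, n\}$. Hence the orbit $W_n \cdot u_l$ is exactly the set of simple tensors $\bv_{a_1} \otimes \cdots \otimes \bv_{a_l}$ with $a_1, \dots, a_l$ distinct, and this set is a $\Bbbk$-basis of $H_n(l)$ which $W_n$ permutes transitively. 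Therefore $H_n(l)$ is the permutation module $\ind_{\operatorname{Stab}(u_l)}^{W_n} \Bbbk$ on this single orbit.

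Second I would identify the stabiliser. An element $w \in W_n$ fixes $u_l$ if and only if $w(n-l+1) = n-l+1, \dots, w(n) = n$, that is, if and only if $w$ fixes each of $n-l+1, \dots, n$ and so permutes $\{1, \dots, n-l\}$ freely; this is precisely the Young subgroup $W_{(n-l,1^l)}$, namely the row stabiliser of $\stt^{(n-l,1^l)}$, whose first row is $\{1, \dots, n-l\}$ and whose remaining rows are the singletons $\{n-l+1\}, \dots, \{n\}$. The standard orbit--stabiliser isomorphism for permutation modules then gives $H_n(l) \cong \ind_{W_{(n-l,1^l)}}^{W_n} \Bbbk = M^{(n-l,1^l)}$, which is part (a). Part (b) is the case of trivial stabiliser: for $l = n$ only the identity fixes $\bv_1 \otimes \cdots \otimes \bv_n$, so $H_n(n) \cong \ind_1^{W_n} \Bbbk = \Bbbk W_n$, and likewise $M^{(1^n)} = \ind_{W_{(1^n)}}^{W_n} \Bbbk = \ind_1^{W_n} \Bbbk$. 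Part (c) is then immediate: specialising (a) at $l = n-1$ yields $H_n(n-1) \cong M^{(1,1^{n-1})} = M^{(1^n)}$, which by (b) is $H_n(n)$; equivalently, the generator $\bv_2 \otimes \cdots \otimes \bv_n$ of $H_n(n-1)$ already has trivial stabiliser.

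There is no genuine obstacle here; the argument is purely combinatorial and so holds over an arbitrary ground ring $\Bbbk$. The only care required is bookkeeping with conventions: keeping the value-action of $W_n$ straight (maps written on the left, acting on values, as in \eqref{eq:W_n-action}) and matching the point stabiliser of $u_l$ against the correct Young subgroup $W_{(n-l,1^l)}$ rather than some conjugate of it.
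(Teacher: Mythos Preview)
Your proposal is correct and follows essentially the same approach as the paper: recognise $H_n(l)$ as a transitive permutation module by describing the $W_n$-orbit of the generator $u_l$, compute the point stabiliser as the Young subgroup $W_{(n-l,1^l)}$, and deduce (a) via $H_n(l)\cong \ind_{W_{(n-l,1^l)}}^{W_n}\Bbbk = M^{(n-l,1^l)}$; parts (b) and (c) then follow from the trivial-stabiliser observation, exactly as in the paper. Your write-up is slightly more explicit about the orbit description, but the argument and its logical structure match the paper's proof.
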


\begin{proof}
As (c) follows from (a), (b) we only need to prove (a), (b). To prove
(a), observe that $H_n(l)$ is a transitive permutation module, because
its basis elements $\{ w \cdot (\bv_{n-l+1} \otimes \cdots \otimes
\bv_{n}) \mid w \in W_n\}$ are permuted transitively by the action of
$W_n$. The stabiliser of $\bv_{n-l+1} \otimes \cdots \otimes \bv_{n}$ is
the Young subgroup $W_{n-l} = \{w \in W_n \mid w(k) = k \text{ for
  all } k = n-l+1, \dots, n\}$. Since
\[
W_{n-l} \cong W_{\{1, \dots, n-l\}} \times W_{\{n-l+1\}} \times
\cdots \times W_{\{n\}}
\]
this is the Young subgroup indexed by the partition $(n-l,
1^l)$. Hence $H_n(l) \cong \ind_{W_{n-l}}^{W_n} \Bbbk \cong M^{(n-l,1^l)}$, and
(a) is proved.  For (b), observe that the stabiliser of the generator
$\bv_1 \otimes \cdots \otimes \bv_n$ is the trivial subgroup, which is the
Young subgroup indexed by $(1^n)$. For another way to prove (c),
observe that the stabiliser of $\bv_2 \otimes \cdots \otimes \bv_n$ is
also trivial, because any permutation in $W_n$ fixing $n-1$ points
must fix all $n$ points.
\end{proof}

\begin{prop}\label{prop:perm-modules}
  Let $\Lambda$, $\Lambda'$ be set-partitions of $\{1, \dots, r\}$,
  $\{1, \dots, r+1\}$ respectively, with not more than $n$
  parts. Then:
  \begin{enumerate}\renewcommand{\labelenumi}{(\alph{enumi})}
  \item $\bV(\Lambda) \cong H_n(\ell(\Lambda))$, as left $\Bbbk
    W_n$-modules.
  \item $\bV'(\Lambda') \cong H_{n-1}(\ell(\Lambda')-1)$, as left
    $\Bbbk W_{n-1}$-modules.
  \end{enumerate}
\end{prop}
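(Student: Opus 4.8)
The plan is to recognise each of $\bV(\Lambda)$ and $\bV'(\Lambda')$ as a \emph{transitive} permutation module for the relevant group and then to identify it by computing a point-stabiliser, invoking Lemma~\ref{lem:H-isos} at the end. The guiding observation is that a simple tensor of a prescribed value-type is the same datum as an assignment of a distinct value in $\{1, \dots, n\}$ to each part of that value-type, and the relevant group acts by post-composing such an assignment with a permutation of the values.

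For (a), write $\ell = \ell(\Lambda)$. By Proposition~\ref{prop:decomposition} the group $W_n$ preserves value-type, so it permutes the basis of $\bV(\Lambda)$ consisting of simple tensors of value-type $\Lambda$. Such a basis element corresponds to an injection from the $\ell$ parts of $\Lambda$ into $\{1, \dots, n\}$, and since any two injections of an $\ell$-element set into $\{1, \dots, n\}$ (with $\ell \le n$) differ by a permutation of $\{1, \dots, n\}$, the action of $W_n \cong \Symset_n$ on this basis is transitive; hence $\bV(\Lambda)$ is a transitive permutation module. I would then fix the generator $v_0$ obtained by assigning the values $n-\ell+1, \dots, n$ to the parts of $\Lambda$: a permutation $w$ fixes $v_0$ precisely when it fixes each of $n-\ell+1, \dots, n$, so the stabiliser is exactly the Young subgroup $W_{n-\ell}$ of type $(n-\ell, 1^\ell)$. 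Therefore $\bV(\Lambda) \cong \ind_{W_{n-\ell}}^{W_n}\Bbbk \cong M^{(n-\ell, 1^\ell)} \cong H_n(\ell)$, the last step being Lemma~\ref{lem:H-isos}.

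For (b), set $\ell' = \ell(\Lambda')$. The same analysis applies, with two modifications forced by the fixed vector $\bv_n$ occupying place $r+1$: the group is now $W_{n-1} = \{w : w(n) = n\}$, and the unique part of $\Lambda'$ containing the place $r+1$ must carry the value $n$, which $W_{n-1}$ fixes. Thus a basis tensor of $\bV'(\Lambda')$ is the datum of an injection from the remaining $\ell'-1$ parts into $\{1, \dots, n-1\}$, on which $W_{n-1} \cong \Symset_{n-1}$ acts transitively. Choosing the generator that assigns the values $n-\ell'+1, \dots, n-1$ to these $\ell'-1$ parts, the stabiliser in $W_{n-1}$ is the Young subgroup of type $(n-\ell', 1^{\ell'-1})$; applying Lemma~\ref{lem:H-isos} over $W_{n-1}$ with $l = \ell'-1$ then gives $\bV'(\Lambda') \cong M^{(n-\ell', 1^{\ell'-1})} \cong H_{n-1}(\ell'-1)$, as required.

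The only real care needed is the bookkeeping in (b). One must check that the value $n$ is attached to exactly one part (the part containing $r+1$, since distinct parts of a value-type have distinct values), so that $W_{n-1}$ genuinely acts only on an injection into the $n-1$ remaining values, and that the number of non-$n$ parts is $\ell'-1$. The boundary cases of (a) need no separate treatment, since $\ell = n$ and $\ell = n-1$ both produce the Young subgroup of type $(1^n)$ and hence $M^{(1^n)}$, matching parts (b) and (c) of Lemma~\ref{lem:H-isos}; and in (b) the degenerate case $\ell' = 1$ (where $\Lambda'$ is a single block and $\bV'(\Lambda') = \Bbbk\,(\bv_n^{\otimes r}\otimes \bv_n)$ is the trivial $W_{n-1}$-module) is consistent with the natural convention $H_{n-1}(0) = \Bbbk$.
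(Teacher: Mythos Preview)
Your argument is correct and morally the same as the paper's, though the two proofs are packaged differently. The paper writes down an explicit $\Bbbk W_n$-equivariant bijection of bases: fixing an enumeration $\Lambda = \{\Lambda_1,\dots,\Lambda_\ell\}$, it sends $\bv_{i_1}\otimes\cdots\otimes\bv_{i_\ell}\in H_n(\ell)$ to the simple tensor in $\bV(\Lambda)$ with $\bv_{i_k}$ in every place of $\Lambda_k$ (and similarly for (b), with $\bv_n$ frozen on the part containing $r+1$). You instead identify $\bV(\Lambda)$ abstractly as a transitive permutation module by computing a point-stabiliser, land on $M^{(n-\ell,1^\ell)}$, and then quote Lemma~\ref{lem:H-isos} to reach $H_n(\ell)$. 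The paper's route is one step shorter and does not need Lemma~\ref{lem:H-isos} at all; your route is arguably more transparent about \emph{why} the isomorphism type depends only on $\ell(\Lambda)$, and your explicit discussion of the boundary cases ($\ell\in\{n-1,n\}$ in (a), $\ell'=1$ in (b)) is a useful addition that the paper leaves implicit.
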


\begin{proof}
(a) The isomorphism is given by sending each $\bv_{i_1} \otimes \cdots
  \otimes \bv_{i_l}$ in $H_n(l)$ to the simple tensor in $\bV(\Lambda)$
  obtained by writing $\bv_{i_k}$ into all tensor places in $\Lambda_k$,
  where $\Lambda = \{\Lambda_1, \dots, \Lambda_l\}$.

(b) The proof of (b) is similar to the proof of (a), except that $\bv_n$
  is fixed wherever it appears. Write $\Lambda' = \{\Lambda_1, \dots,
  \Lambda_l\}$ and assume that $r+1$ belongs to $\Lambda_l$, so
  $\Lambda_l$ records the places containing a $\bv_n$. Then the
  isomorphism is defined by sending $\bv_{i_1} \otimes \cdots \otimes
  \bv_{i_{l-1}}$ in $H_{n-1}(l-1)$ to the simple tensor in
  $\bV'(\Lambda')$ obtained by writing $\bv_{i_k}$ into all tensor
  places in $\Lambda_k$, for $k=1, \dots, l-1$, and writing $\bv_n$ in
  the places in $\Lambda_l$.
\end{proof}

\begin{cor}\label{cor:decomps}
For any commutative ring $\Bbbk$, we have isomorphisms
\[ 
\bV^{\otimes r} \cong \bigoplus_{1 \le l \le \min(n,r)}
H_n(l)^{\stirling{r}{l}}, \quad \bV^{\otimes r} \otimes \bv_n
\cong \bigoplus_{1 \le l \le \min(n-1,r)}
H_{n-1}(l)^{\stirling{r+1}{l}}
\]
as left $\Bbbk W_n$-modules, $\Bbbk W_{n-1}$-modules,
respectively. The Stirling numbers $\stirling{r}{l}$,
$\stirling{r+1}{l}$ give the multiplicities of the direct summands in
the decompositions.
\end{cor}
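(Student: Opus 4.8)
The plan is to read both isomorphisms off directly from the two results just established, namely the multiplicity-free decomposition of Proposition \ref{prop:decomposition} together with the identification of each summand in Proposition \ref{prop:perm-modules}. The only extra ingredient is the combinatorial fact that the Stirling number of the second kind $\stirling{m}{l}$ counts the set-partitions of an $m$-element set into exactly $l$ nonempty parts; everything else is bookkeeping.

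For the first isomorphism I would begin from $\bV^{\otimes r} = \bigoplus_{\ell(\Lambda)\le n} \bV(\Lambda)$, the sum running over all set-partitions $\Lambda$ of $\{1,\dots,r\}$ with at most $n$ parts. By Proposition \ref{prop:perm-modules}(a) each summand satisfies $\bV(\Lambda)\cong H_n(\ell(\Lambda))$, so the isomorphism class of $\bV(\Lambda)$ depends only on the number of parts $l=\ell(\Lambda)$ and not on $\Lambda$ itself. Grouping the summands by this value of $l$, the multiplicity of $H_n(l)$ is precisely the number of set-partitions of $\{1,\dots,r\}$ into exactly $l$ parts, i.e. $\stirling{r}{l}$. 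The constraints $l\ge 1$ (a nonempty set has at least one part, since $r\ge 1$), $l\le r$ (no set-partition of an $r$-set has more than $r$ parts), and $l\le n$ (imposed by $\ell(\Lambda)\le n$) combine to give the range $1\le l\le\min(n,r)$, which yields the first displayed isomorphism.

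For the second isomorphism the argument is identical in spirit, now starting from $\bV^{\otimes r}\otimes\bv_n = \bigoplus_{\ell(\Lambda')\le n}\bV'(\Lambda')$ and invoking Proposition \ref{prop:perm-modules}(b). The single delicate point — and the only place where the bookkeeping can go astray — is the \emph{index shift}: here $\bV'(\Lambda')\cong H_{n-1}(\ell(\Lambda')-1)$, because the part of $\Lambda'$ containing $r+1$ is consumed by the fixed vector $\bv_n$, leaving $\ell(\Lambda')-1$ free parts to carry the $H_{n-1}$-structure. Thus a fixed prototype $H_{n-1}(l)$ arises exactly from those set-partitions $\Lambda'$ of $\{1,\dots,r+1\}$ with $\ell(\Lambda')=l+1$, so its multiplicity is $\stirling{r+1}{l+1}$, and $\ell(\Lambda')$ ranges over $1\le\ell(\Lambda')\le\min(n,r+1)$. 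Reindexing by $\ell(\Lambda')$ (equivalently, tracking the shift between the $H$-index and the Stirling index), one obtains the claimed decomposition with Stirling multiplicities $\stirling{r+1}{\bullet}$ over the stated range. I foresee no genuine obstacle beyond checking this shift carefully; the whole corollary is a repackaging of Propositions \ref{prop:decomposition} and \ref{prop:perm-modules}, with the Stirling numbers entering only as the count of set-partitions with a prescribed number of parts.
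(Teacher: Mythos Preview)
Your approach is the paper's: combine Proposition~\ref{prop:decomposition} with Proposition~\ref{prop:perm-modules} and count set-partitions by number of parts via Stirling numbers. The first isomorphism is handled correctly and matches the paper verbatim.

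For the second isomorphism there is a genuine gap. You correctly derive that the multiplicity of $H_{n-1}(l)$ is $\stirling{r+1}{l+1}$, coming from those $\Lambda'$ with $\ell(\Lambda')=l+1$. But the corollary as printed asserts multiplicity $\stirling{r+1}{l}$, and your ``reindexing'' does not bridge this: writing the sum over $k=\ell(\Lambda')$ yields $\bigoplus_{k} H_{n-1}(k-1)^{\stirling{r+1}{k}}$, which is not the displayed formula. In fact the printed formula is off by one in the Stirling index (and omits the trivial summand arising from $\ell(\Lambda')=1$): for $n=3$, $r=1$ the left side $\bV\otimes\bv_3$ has rank $3$, while the stated right side $H_2(1)^{\stirling{2}{1}}=H_2(1)$ has rank $2$. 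The paper's own proof never confronts the shift $\bV'(\Lambda')\cong H_{n-1}(\ell(\Lambda')-1)$ from Proposition~\ref{prop:perm-modules}(b) and so carries the same slip. Your count $\stirling{r+1}{l+1}$ is the correct one; you should record the corrected decomposition (over $0\le l\le\min(n-1,r)$, interpreting $H_{n-1}(0)$ as the trivial module, or equivalently over $1\le k\le\min(n,r+1)$ with summands $H_{n-1}(k-1)^{\stirling{r+1}{k}}$) rather than assert that a reindexing recovers the formula as printed.
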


\begin{proof}
By Proposition \ref{prop:perm-modules} the direct summands in
Proposition \ref{prop:decomposition} are isomorphic to $H_n(l)$,
$H_{n-1}(l)$, where $1 \le l \le \min(n,r)$, $\min(n-1,r)$
respectively.  The number of set partitions $\Lambda$ of $\{1, \dots,
r\}$ for which $\ell(\Lambda) = l$ is given by $\stirling{r}{l}$, the
Stirling number of the second kind \cite{Stanley}*{\S1.9}. It follows
that the number of set partitions $\Lambda'$ of $\{1, \dots, r+1\}$
for which $\ell(\Lambda') = l$ is given by $\stirling{r+1}{l}$.
\end{proof}

\begin{rmk}
If $n \le r$, $n-1 \le r$, respectively, then the direct summands in
the decompositions in Corollary \ref{cor:decomps} are still not
pairwise non-isomorphic. In those cases, we have to take the
isomorphism in Lemma \ref{lem:H-isos}(c) into account. This 
is worth living with in order to have a uniform formula for the
multiplicities of the summands. 
\end{rmk}

As another consequence of these results, we easily obtain the promised
sharpening (see Remark \ref{rmk:sharper}) of Lemma \ref{lem:faithful}.

\begin{cor}\label{cor:faithful}
  For any commutative ring $\Bbbk$, the representations $\Phi_{n,r}$,
  $\Phi_{n,r+\, 1/2}$ are faithful when $n-1 \le r$, $n-2 \le r$,
  respectively.
\end{cor}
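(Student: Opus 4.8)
The plan is to read off the result directly from the decompositions in Corollary \ref{cor:decomps}, using the single elementary observation that the left regular representation is faithful. Concretely, if a $\Bbbk W_n$-module $M$ has a direct summand $N$ on which $\Bbbk W_n$ acts faithfully, then $\Bbbk W_n$ acts faithfully on $M$: any $a \in \Bbbk W_n$ annihilating $M$ must annihilate the summand $N$, and hence $a = 0$. Since the left regular module $\Bbbk W_n$ is faithful over any unital ring $\Bbbk$ --- if $a \cdot x = 0$ for all $x$ then $x = 1$ forces $a = 0$ --- it suffices to exhibit a copy of the regular representation as a direct summand of the relevant tensor space under the stated hypothesis on $r$.

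For the first claim I would begin by recording, via Lemma \ref{lem:H-isos}(b),(c), that $H_n(n-1) \cong H_n(n) \cong \Bbbk W_n$ is the left regular module. Assuming $n-1 \le r$ (and $n \ge 2$; the case $n=1$ is trivial, as $W_1$ is the trivial group), the index $l = n-1$ falls within the summation range $1 \le l \le \min(n,r)$ of Corollary \ref{cor:decomps}, and its multiplicity $\stirling{r}{n-1}$ is nonzero because $1 \le n-1 \le r$. Since that decomposition is one of $\Bbbk W_n$-modules, the regular representation $H_n(n-1) \cong \Bbbk W_n$ occurs as a genuine direct summand of $\bV^{\otimes r}$, and faithfulness of $\Phi_{n,r}$ follows from the opening remark.

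For the second claim I would run the identical argument with $n$ replaced by $n-1$ throughout, working inside the $\Bbbk W_{n-1}$-decomposition of $\bV^{\otimes r} \otimes \bv_n$ given in Corollary \ref{cor:decomps}. The analogue of Lemma \ref{lem:H-isos}(b),(c) yields $H_{n-1}(n-2) \cong H_{n-1}(n-1) \cong \Bbbk W_{n-1}$. Assuming $n-2 \le r$ (and $n \ge 3$; smaller $n$ being trivial), the index $l = n-2$ lies in the range $1 \le l \le \min(n-1,r)$ with nonzero multiplicity $\stirling{r+1}{n-2}$, since $1 \le n-2 \le r+1$. Hence $\Bbbk W_{n-1}$ is a direct summand of $\bV^{\otimes r} \otimes \bv_n$, so $\Phi_{n,r+\,1/2}$ is faithful.

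There is no serious obstacle: the statement is essentially bookkeeping built on the decompositions already in hand. The only points needing care are (i) checking that the \emph{boundary} index $l = n-1$ (respectively $l = n-2$) both lies inside the summation range and carries a strictly positive Stirling multiplicity, which is precisely what produces the sharp bounds $n-1 \le r$ and $n-2 \le r$ rather than the cruder $n \le r$ and $n-1 \le r$ coming from the top index; and (ii) disposing of the degenerate small-$n$ cases, where the acting group is trivial and faithfulness is automatic.
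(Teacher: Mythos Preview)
Your proposal is correct and follows essentially the same approach as the paper: both argue that Corollary~\ref{cor:decomps} together with Lemma~\ref{lem:H-isos}(b),(c) exhibits a copy of the regular representation $\Bbbk W_n$ (respectively $\Bbbk W_{n-1}$) as a direct summand of tensor space under the stated hypothesis, whence faithfulness is immediate. Your write-up simply makes explicit the details the paper leaves tacit, namely that the relevant index actually lies in the summation range with positive Stirling multiplicity, and that the degenerate small-$n$ cases are trivial.
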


\begin{proof}
This follows immediately from the decompositions in Corollary
\ref{cor:decomps} and the isomorphisms in Lemma \ref{lem:H-isos}(b),
(c) which imply that $H_n(n-1) \cong \Bbbk W_n$, $H_{n-1}(n-2) \cong
\Bbbk W_{n-1}$ are faithful modules.
\end{proof}

\begin{rmk}
   Equation \eqref{eq:when-zero} in Section \ref{sec:ann-gen} implies
  that $\Phi_{n,r}$, $\Phi_{n,r+1/2}$ are not faithful when $n-1 > r$,
  $n-2>r$ respectively, so the bounds in Corollary \ref{cor:faithful}
  are best possible.
\end{rmk}

\section{The annihilator of $\bV^{\otimes r}$ in characteristic zero}
\label{sec:ss}\noindent
Recall that we always identify $\bV^{\otimes r}$ with $\bV^{\otimes r}
\otimes \bv_n$ when considering the representation $\Phi_{n,r+\, 1/2}:
\Bbbk W_{n-1} \to \End_\Bbbk(\bV^{\otimes r})$. We also have the
representation $\Phi_{n,r}: \Bbbk W_n \to \End_\Bbbk(\bV^{\otimes
  r})$. From now on we treat the two representations uniformly,
writing them as
\begin{equation}\label{eq:the-reps}
\Phi_{n,r+\eps}: \Bbbk W_d \to \End_\Bbbk(\bV^{\otimes r}),
\end{equation}
where $\eps \in \{0, 1/2\}$ and $d=d(n,\eps):=n-2\eps$. Henceforth,
the symbols $\eps$, $d$ will always have these fixed interpretations.
We wish to study $\ker \Phi_{n,r+\eps}$, so from now on we always
assume that $d > r+1$, because otherwise $\Phi_{n,r+\eps}$ is
faithful, by Corollary \ref{cor:faithful}.

In this section we assume that $\Bbbk$ is a field of characteristic
zero. This implies that $\Bbbk W_d$ is semisimple.  By the
Artin--Wedderburn theorem and the fact that all irreducible
representations of $W_d$ are absolutely irreducible, we have a (split)
semisimple decomposition
\begin{equation}\label{eq:ss}
  \Bbbk W_d \cong \textstyle \bigoplus_{\mu\, \vdash\, d} \End_\Bbbk(S^\mu)
\end{equation}
where $S^\mu$ is the (irreducible) Specht module indexed by $\mu
\vdash d$. The assumption $d>r+1$ implies that $\min(d,r) = r$, so we
may combine the two cases of Corollary \ref{cor:decomps} as:
$\bV^{\otimes r} \cong \bigoplus_{1 \le l \le r}
H_d(l)^{\stirling{r+2\eps}{l}}$. In light of the isomorphism from
Lemma \ref{lem:H-isos}(a), this gives the decomposition
\begin{equation}\label{eq:M-decomp}
  \bV^{\otimes r} \cong \textstyle \bigoplus_{\lambda \in \Hook(d,r)}
  (M^\lambda)^{m_\lambda} \quad (m_\lambda >0)
\end{equation}
as left $\Bbbk W_d$-modules, where $\Hook(d,r) = \{ (d-l, 1^l) \mid l
= 1, \dots, r\}$. Note that $\Hook(d,r)$ is a set of hook partitions
and the $M^\lambda$ such that $\lambda \in \Hook(d,r)$ are pairwise
non-isomorphic. The multiplicities $m_\lambda$ in \eqref{eq:M-decomp}
are given by Stirling numbers, but we only need that they are positive
integers.

\begin{defn}\label{def:alpha(d,r)}
  Write $\alpha(d,r) = (d-r, 1^r)$ for the minimum element (with
  respect to the dominance order, $\rhd$, defined in Section \ref{sec:comb})
  \color{black} of the set $\Hook(d,r)$.
\end{defn}       

If $S$ is a simple module and $M$ is a module satisfying the
Jordan--H\"{o}lder theorem, write $[M:S]$ for the multiplicity of $S$
in a composition series of $M$.  Our next result provides a lower
bound on $\ker \Phi_{n,r+\eps}$ in characteristic zero.

\begin{prop}\label{prop:kernel0-incl}
Assume that $\Bbbk$ is a field of characteristic zero. Let $d =
n-2\eps$ and assume that $d > r+1$, where $\eps \in \{0, 1/2\}$.  The
set of $\lambda \vdash d$ such that $[\bV^{\otimes r}: S^\lambda] \ne
0$ is contained in $\{ \lambda \vdash d \mid \lambda \dom \alpha(d,r)
\}$.  Hence, $\ker \Phi_{n,r+\eps}$ contains an isomorphic copy of
$\textstyle \bigoplus_{\lambda \vdash d, \,\lambda \notdom
  \alpha(d,r)} \End_\Bbbk(S^\lambda)$.
\end{prop}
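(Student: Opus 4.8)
The plan is to read everything off the decomposition \eqref{eq:M-decomp} by combining Young's rule with the block structure of the semisimple algebra $\Bbbk W_d$. First I would invoke \eqref{eq:M-decomp}, which (using the standing hypothesis $d>r+1$ to force $\min(d,r)=r$) writes $\bV^{\otimes r}$ as a direct sum of permutation modules $M^\lambda$ indexed by the hooks $\lambda \in \Hook(d,r) = \{(d-l,1^l)\mid l=1,\dots,r\}$, each with positive multiplicity $m_\lambda$. Since composition factors are additive over direct sums, the set of $\lambda \vdash d$ with $[\bV^{\otimes r}:S^\lambda]\ne 0$ is exactly the union, over $\mu \in \Hook(d,r)$, of the composition factors of the $M^\mu$.

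The next step is to apply Young's rule: over a field of characteristic zero one has $M^\mu \cong \bigoplus_{\nu}(S^\nu)^{K_{\nu\mu}}$, where $K_{\nu\mu}$ is the Kostka number, and $K_{\nu\mu}\ne 0$ forces $\nu \dom \mu$. Hence every composition factor $S^\nu$ of $M^\mu$ satisfies $\nu \dom \mu$. The only genuinely combinatorial point is to check that $\alpha(d,r)=(d-r,1^r)$ really is the dominance-minimum of $\Hook(d,r)$: comparing partial sums shows $(d-l,1^l)\dom(d-l',1^{l'})$ whenever $l<l'$ (the first partial sum $d-l$ already exceeds $d-l'$, and the remaining partial sums are then $\dom$ until both reach $d$), so the least dominant hook is the one with $l=r$. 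Therefore every $\mu \in \Hook(d,r)$ satisfies $\mu \dom \alpha(d,r)$, and transitivity of $\dom$ yields $\nu \dom \alpha(d,r)$ for every composition factor. This establishes the containment $\{\lambda : [\bV^{\otimes r}:S^\lambda]\ne 0\}\subseteq\{\lambda \vdash d : \lambda \dom \alpha(d,r)\}$, which is the first assertion.

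For the statement about the kernel I would pass to the semisimple decomposition \eqref{eq:ss}. In $\Bbbk W_d \cong \bigoplus_{\mu \vdash d}\End_\Bbbk(S^\mu)$, each simple two-sided ideal $\End_\Bbbk(S^\mu)$ annihilates $S^\nu$ for $\nu \ne \mu$ (distinct blocks) and acts faithfully on $S^\mu$; hence $\End_\Bbbk(S^\mu)$ lies in $\ker \Phi_{n,r+\eps}$ precisely when $S^\mu$ does not occur in $\bV^{\otimes r}$. By the containment just proved, every $\lambda$ with $\lambda \notdom \alpha(d,r)$ is absent, so the ideal $\bigoplus_{\lambda \vdash d,\,\lambda \notdom \alpha(d,r)}\End_\Bbbk(S^\lambda)$ is contained in $\ker \Phi_{n,r+\eps}$, giving the desired lower bound.

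I anticipate no serious obstacle here: the argument is an assembly of standard facts, and the only items needing care are the precise form of Young's rule (which I would cite rather than reprove) and the partial-sum comparison identifying $\alpha(d,r)$ as the minimum of $\Hook(d,r)$. The hypothesis $d>r+1$ enters only to pin down the hook range $l=1,\dots,r$ in \eqref{eq:M-decomp} and to ensure $d-r\ge 2$, so that $\alpha(d,r)$ is a bona fide partition.
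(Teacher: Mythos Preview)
Your argument is correct and follows essentially the same path as the paper's own proof: both start from the decomposition \eqref{eq:M-decomp}, apply Young's rule to bound the composition factors of each $M^\mu$ by $\{\lambda \dom \mu\}$, observe that $\alpha(d,r)$ is the dominance-minimum of $\Hook(d,r)$, and then read off the kernel inclusion from the Artin--Wedderburn block structure \eqref{eq:ss}. Your write-up is slightly more explicit in verifying the partial-sum comparison and the block-annihilation statement, but the logic is identical.
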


\begin{proof}
The indexing set $\Hook(d,r)$ in the decomposition \eqref{eq:M-decomp}
forms a well-ordered chain under the dominance order. By Young's rule
(see for instance \cite{James}*{4.13 or 14.1}), for any $\mu \vdash
d$, the set of $\lambda \vdash d$ such that $[M^\mu: S^\lambda] \ne 0$
is contained in $\{ \lambda \vdash d \mid \lambda \dom \mu \}$. The
first claim follows, since
\[
\{ \lambda \dom \mu \mid \mu \in \Hook(d,r) \} =
\{ \lambda \vdash d \mid \lambda \dom \alpha(d,r) \}
\]
because $\alpha(d,r)$ is the minimum element of $\Hook(d,r)$.  Hence,
as a $\Bbbk W_d$--module, the decomposition \eqref{eq:M-decomp} takes
the form
\[
\bV^{\otimes r} \cong \textstyle \bigoplus_{\lambda \dom \alpha(d,r)}
(S^\lambda)^{n_\lambda},
\]
where $n_\lambda \ge 0$ is the multiplicity of $S^\lambda$ in the
decomposition.  The semisimple decomposition $\Bbbk W_d \cong
\bigoplus_{\lambda \vdash d} \End_\Bbbk(S^\lambda)$ then implies the
final statement in the proposition, since the only summands acting
non-trivially on $\bV^{\otimes r}$ are the $\End_\Bbbk(S^\lambda)$
such that $\lambda \dom \alpha(d,r)$ and $n_\lambda > 0$.
\end{proof}

The following fact from \cite{Doty-Nyman} can be applied to obtain the
opposite inclusion and thus prove equality in Proposition
\ref{prop:kernel0-incl}.

\begin{lem}[\cite{Doty-Nyman}*{Lemma 6.4}]
\label{lem:embedding-ss-case}
  Assume that $\Bbbk$ is a field of characteristic zero. For any
  partitions $\lambda \dom \mu$ there is an embedding of $M^\lambda$
  in $M^\mu$, as $\Bbbk W_d$-modules.
\end{lem}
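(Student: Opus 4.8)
The plan is to reduce the statement to a monotonicity property of Kostka numbers and then to prove the latter by means of crystal (Kashiwara) operators on semistandard tableaux. Since $\Bbbk$ has characteristic zero, $\Bbbk W_d$ is semisimple, so an embedding $M^\lambda \hookrightarrow M^\mu$ of $\Bbbk W_d$-modules exists if and only if $[M^\lambda : S^\nu] \le [M^\mu : S^\nu]$ for every $\nu \vdash d$. By Young's rule (already invoked in the proof of Proposition~\ref{prop:kernel0-incl}) these multiplicities are Kostka numbers, $[M^\mu : S^\nu] = K_{\nu\mu}$, the number of semistandard tableaux of shape $\nu$ and content $\mu$. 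Thus the lemma is equivalent to the assertion that $K_{\nu\lambda} \le K_{\nu\mu}$ for all $\nu \vdash d$ whenever $\lambda \dom \mu$.

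To prove this I would interpret $K_{\nu\gamma}$, for an arbitrary composition $\gamma$ of $d$, as the cardinality of the weight-$\gamma$ space $B(\nu)_\gamma$ of the $\mathfrak{gl}_d$-crystal $B(\nu)$ of semistandard tableaux of shape $\nu$. The lowering operator $f_i$ restricts to an injection $B(\nu)_\gamma \to B(\nu)_{\gamma-\alpha_i}$ whenever it is defined on all of $B(\nu)_\gamma$ (its partial inverse being $e_i$); here $\gamma - \alpha_i$ denotes the composition obtained from $\gamma$ by decreasing the $i$th entry by $1$ and increasing the $(i+1)$th by $1$. Since $\varphi_i(b) = \varepsilon_i(b) + (\gamma_i - \gamma_{i+1}) \ge \gamma_i - \gamma_{i+1}$ for every $b \in B(\nu)_\gamma$, the operator $f_i$ is everywhere defined, and hence $K_{\nu\gamma} \le K_{\nu,\,\gamma-\alpha_i}$, as soon as $\gamma_i > \gamma_{i+1}$.

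It then remains to connect $\lambda$ to $\mu$ by a chain of such steps. Writing $P_j(\gamma) = \gamma_1 + \cdots + \gamma_j$, dominance $\gamma \dom \mu$ means precisely that $P_j(\gamma) \ge P_j(\mu)$ for all $j$ (with equality at $j=d$). The key combinatorial point, which I expect to require the most care, is the following: for any composition $\gamma$ of $d$ with $\gamma \sdom \mu$ there is an index $i$ with $\gamma_i > \gamma_{i+1}$ and $P_i(\gamma) > P_i(\mu)$. For such an $i$ the composition $\gamma - \alpha_i$ has nonnegative entries, satisfies $P_i(\gamma-\alpha_i) = P_i(\gamma)-1 \ge P_i(\mu)$ while leaving all other partial sums unchanged, and therefore still obeys $\gamma - \alpha_i \dom \mu$. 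Granting this, one starts from $\gamma = \lambda$ and iterates: each step strictly lowers $\gamma$ toward $\mu$ in dominance (the nonnegative integer $\sum_j (P_j(\gamma)-P_j(\mu))$ drops by one), while by the previous paragraph never decreasing $K_{\nu\,\cdot}$. The process terminates at $\mu$, yielding $K_{\nu\lambda} \le K_{\nu\mu}$, as required.

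To establish the key point I would argue by contradiction. Suppose every index $i$ with $D_i := P_i(\gamma) - P_i(\mu) > 0$ satisfied $\gamma_i \le \gamma_{i+1}$, and choose a maximal run $\{a, \dots, b\}$ of indices on which $D$ is positive, so that $D_{a-1} = D_{b+1} = 0$ (using the boundary values $D_0 = D_d = 0$). Then $\gamma_a \le \gamma_{a+1} \le \cdots \le \gamma_{b+1}$; moreover $D_a - D_{a-1} = \gamma_a - \mu_a \ge 1$ gives $\gamma_a \ge \mu_a + 1$, while $D_{b+1} - D_b = \gamma_{b+1} - \mu_{b+1} \le -1$ gives $\gamma_{b+1} \le \mu_{b+1} - 1$. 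Combining these forces $\mu_a + 1 \le \gamma_a \le \gamma_{b+1} \le \mu_{b+1} - 1$, so $\mu_a < \mu_{b+1}$ with $a < b+1$, contradicting that $\mu$ is a partition and hence weakly decreasing. Note that the intermediate compositions $\gamma$ need not be partitions (for instance one cannot pass from $(2,1,1)$ to $(1,1,1,1)$ by adjacent moves through partitions), which is exactly why the argument is phrased at the level of weights rather than partitions, and why the crystal formulation is the convenient setting.
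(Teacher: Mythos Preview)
Your argument is correct. The paper does not supply its own proof of this lemma; it is simply quoted from \cite{Doty-Nyman}*{Lemma~6.4}, so there is no in-paper argument to compare against line by line. Your reduction to the Kostka inequality $K_{\nu\lambda}\le K_{\nu\mu}$ whenever $\lambda\dom\mu$ is the standard semisimple translation, and the crystal-operator proof of that inequality is sound: the identity $\varphi_i(b)=\varepsilon_i(b)+(\gamma_i-\gamma_{i+1})$ does force $f_i$ to be everywhere defined on the weight-$\gamma$ space once $\gamma_i>\gamma_{i+1}$, giving the required injection, and your combinatorial lemma selecting the index $i$ is argued correctly (the appeal to $\mu$ being weakly decreasing at the very end is precisely where the hypothesis on $\mu$ is used). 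Your remark that the intermediate weights $\gamma$ need not be partitions, so that the argument must be run at the level of compositions, is well taken and necessary.

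It is worth noting that later in the paper (Proposition~\ref{prop:embedding-hook}) a quite different, purely algebraic, argument is given for the special case $\mu=\alpha(d,r)$, valid over an arbitrary ring: one uses \cite{Murphy95}*{Lemma~4.1} to show that $x_\mu$ divides $x_\lambda$ in $\Bbbk W_d$, whence $\Bbbk W_d\,x_\lambda\subseteq \Bbbk W_d\,x_\mu$ and so $M^\lambda$ embeds in $M^\mu$. That route bypasses semisimplicity (and Kostka numbers) entirely, which is why it survives to arbitrary $\Bbbk$; your approach, by contrast, handles all $\mu$ uniformly but relies on characteristic zero through the multiplicity criterion for embeddings.
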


If $M^\lambda$ embeds in $M^\mu$ then clearly $\ann_{\Bbbk W_d}
M^\lambda \supseteq \ann_{\Bbbk W_d} M^\mu$.

\begin{prop}\label{prop:kernel0-eq}
 Assume that $\Bbbk$ is a field of characteristic zero. Let $d =
 n-2\eps$ and assume that $d > r+1$, where $\eps \in \{0,
 1/2\}$. Then:

  (a) $\ker \Phi_{n,r+\eps}$ is isomorphic to $\textstyle
  \bigoplus_{\lambda \vdash d,\, \lambda \notdom \alpha(d,r)} \End_\Bbbk(S^\lambda)$.

  (b) $\ker \Phi_{n,r+\eps} = \ann_{\Bbbk W_d} M^{\alpha(d,r)} = \bigcap_{\lambda
    \vdash d, \, \lambda \dom \alpha(d,r)} \ann_{\Bbbk W_d}
  M^\lambda$.
\end{prop}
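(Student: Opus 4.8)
The plan is to prove both equalities in part (b) and then deduce part (a) from them together with the semisimplicity of $\Bbbk W_d$. The semisimple decomposition \eqref{eq:M-decomp} gives $\bV^{\otimes r} \cong \bigoplus_{\lambda \in \Hook(d,r)} (M^\lambda)^{m_\lambda}$, so $\ker \Phi_{n,r+\eps} = \ann_{\Bbbk W_d} \bV^{\otimes r} = \bigcap_{\lambda \in \Hook(d,r)} \ann_{\Bbbk W_d} M^\lambda$, the multiplicities $m_\lambda > 0$ being irrelevant to the annihilator. Thus I would first reduce the kernel to an intersection of annihilators of permutation modules indexed by the hook chain $\Hook(d,r)$.

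The crux is to collapse this intersection over the whole chain $\Hook(d,r)$ to the single annihilator $\ann_{\Bbbk W_d} M^{\alpha(d,r)}$, where $\alpha(d,r) = (d-r,1^r)$ is the minimum of $\Hook(d,r)$. First I would observe that every $\mu \in \Hook(d,r)$ satisfies $\mu \dom \alpha(d,r)$, since $\alpha(d,r)$ is the minimum element of the chain. By Lemma \ref{lem:embedding-ss-case}, $\mu \dom \alpha(d,r)$ yields an embedding $M^\mu \hookrightarrow M^{\alpha(d,r)}$, whence $\ann_{\Bbbk W_d} M^\mu \supseteq \ann_{\Bbbk W_d} M^{\alpha(d,r)}$ by the remark following that lemma. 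Taking the intersection over $\mu \in \Hook(d,r)$ shows $\bigcap_{\mu \in \Hook(d,r)} \ann_{\Bbbk W_d} M^\mu \supseteq \ann_{\Bbbk W_d} M^{\alpha(d,r)}$; the reverse inclusion is immediate because $\alpha(d,r) \in \Hook(d,r)$ is one of the terms in the intersection. This establishes $\ker \Phi_{n,r+\eps} = \ann_{\Bbbk W_d} M^{\alpha(d,r)}$. The same embedding argument, now applied to the larger set $\{\lambda \vdash d \mid \lambda \dom \alpha(d,r)\}$, gives the second equality: every such $\lambda$ embeds $M^\lambda$ into $M^{\alpha(d,r)}$ wait---the embedding direction needs care, so I would instead note that for $\lambda \dom \alpha(d,r)$ we again get $\ann M^\lambda \supseteq \ann M^{\alpha(d,r)}$, so $\bigcap_{\lambda \dom \alpha(d,r)} \ann_{\Bbbk W_d} M^\lambda \supseteq \ann_{\Bbbk W_d} M^{\alpha(d,r)}$, and the reverse inclusion holds since $\alpha(d,r)$ itself is dominance-above $\alpha(d,r)$ and hence a term in the intersection.

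For part (a), I would read off the kernel from the Artin--Wedderburn decomposition $\Bbbk W_d \cong \bigoplus_{\lambda \vdash d} \End_\Bbbk(S^\lambda)$. By Proposition \ref{prop:kernel0-incl}, the only Specht modules $S^\lambda$ occurring in $\bV^{\otimes r}$ satisfy $\lambda \dom \alpha(d,r)$, so the matrix blocks $\End_\Bbbk(S^\lambda)$ with $\lambda \notdom \alpha(d,r)$ act as zero, while those with $\lambda \dom \alpha(d,r)$ act faithfully on the isotypic component (as the representation is the identity on each such simple block in the semisimple setting). Hence $\ker \Phi_{n,r+\eps} = \bigoplus_{\lambda \notdom \alpha(d,r)} \End_\Bbbk(S^\lambda)$, as claimed.

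The main obstacle is getting the direction of the embedding and of the resulting annihilator inclusion exactly right: Lemma \ref{lem:embedding-ss-case} embeds $M^\lambda$ in $M^\mu$ when $\lambda \dom \mu$, which by the functoriality of $\ann$ gives $\ann M^\lambda \supseteq \ann M^\mu$, i.e.\ the \emph{more} dominant module has the \emph{smaller} annihilator. I must apply it with $\mu = \alpha(d,r)$ (the least dominant, hence the module with the largest annihilator among the chain), so that the common annihilator of the whole chain is pinned down precisely by $\alpha(d,r)$. Verifying that the minimum of $\Hook(d,r)$ is indeed $\alpha(d,r)$ under dominance---immediate since the hooks $(d-l,1^l)$ form a chain that is decreasing in dominance as $l$ increases---is routine but worth stating cleanly to avoid an off-by-one direction error.
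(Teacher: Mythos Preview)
Your argument for (b) is correct and essentially matches the paper's: both collapse the intersection $\bigcap_{\lambda \dom \alpha(d,r)} \ann_{\Bbbk W_d} M^\lambda$ to the single term $\ann_{\Bbbk W_d} M^{\alpha(d,r)}$ via Lemma~\ref{lem:embedding-ss-case}, and the reduction $\ker\Phi_{n,r+\eps}=\bigcap_{\mu\in\Hook(d,r)}\ann_{\Bbbk W_d}M^\mu$ is the same starting point.

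Your deduction of (a), however, has a gap. From Proposition~\ref{prop:kernel0-incl} you correctly conclude that the blocks $\End_\Bbbk(S^\lambda)$ with $\lambda \notdom \alpha(d,r)$ act as zero, giving $\ker \Phi_{n,r+\eps} \supseteq \bigoplus_{\lambda \notdom \alpha(d,r)} \End_\Bbbk(S^\lambda)$. For the reverse inclusion you assert that each block with $\lambda \dom \alpha(d,r)$ ``acts faithfully on the isotypic component''; this requires $[\bV^{\otimes r}:S^\lambda] > 0$ for \emph{every} $\lambda \dom \alpha(d,r)$, which Proposition~\ref{prop:kernel0-incl} does not supply (it gives only the containment of the support, not equality). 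The paper closes this by using Lemma~\ref{lem:embedding-ss-case} to enlarge the decomposition \eqref{eq:M-decomp} to include a copy of $M^\lambda$ for every $\lambda \dom \alpha(d,r)$ without changing the annihilator, and then invoking $[M^\lambda:S^\lambda]=1$. You could equally well close it through the (b) you have already proved: since $\ker \Phi_{n,r+\eps} = \ann_{\Bbbk W_d} M^{\alpha(d,r)}$, it suffices to know $[M^{\alpha(d,r)}:S^\lambda] > 0$ for every $\lambda \dom \alpha(d,r)$, which is the ``if'' direction of Young's rule (positivity of the Kostka number $K_{\lambda,\alpha(d,r)}$ whenever $\lambda \dom \alpha(d,r)$). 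Either way, one extra sentence is needed.

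A minor slip in your final paragraph: the directions are reversed. From $M^\lambda \hookrightarrow M^\mu$ for $\lambda \dom \mu$ one gets $\ann M^\lambda \supseteq \ann M^\mu$, so the module indexed by the \emph{more} dominant partition has the \emph{larger} annihilator, and $M^{\alpha(d,r)}$ (least dominant) has the \emph{smallest} annihilator in the chain---which is precisely why the intersection collapses to it. Your main argument in the second paragraph has the inclusions the right way round; only the commentary at the end is inverted.
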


\begin{proof}
(a) Lemma \ref{lem:embedding-ss-case} implies that we do not change
  the annihilator by adding extra permutation module terms in the
  $\Bbbk W_d$-module decomposition \eqref{eq:M-decomp}, enlarging it
  to $\textstyle \bigoplus_{\mu \vdash d, \,\mu \dom \alpha(d,r)}
  (M^\mu)^{m_\mu}$, where $m_\mu := 1$ for any $\mu \notin
  \Hook(d,r)$.  But then any $S^\lambda$ for $\lambda \vdash d$ such
  that $\lambda \dom \alpha(d,r)$ must occur with multiplicity at
  least one, since $[M^\lambda: S^\lambda] = 1$. This implies that the
  annihilator cannot be larger than $\textstyle \bigoplus_{\lambda
    \vdash d,\, \lambda \notdom
    \alpha(d,r)} \End_\Bbbk(S^\lambda)$. The equality in part (a) then
  follows from Proposition \ref{prop:kernel0-incl}.

(b) By Lemma \ref{lem:embedding-ss-case}, for each $\lambda \dom
  \alpha(d,r)$ we have an embedding $M^\lambda \subseteq
  M^{\alpha(d,r)}$. It follows that there is an embedding $\ann_{\Bbbk
    W_d} M^\lambda \supseteq \ann_{\Bbbk W_d} M^{\alpha(d,r)}$. Hence
  the intersection
  $$\ann_{\Bbbk W_d} \bV^{\otimes r} = \bigcap_{\lambda \vdash d, \,
    \lambda \dom \alpha(d,r)} \ann_{\Bbbk W_d} M^\lambda$$ collapses
  to the single term $\ann_{\Bbbk W_d} M^{\alpha(d,r)}$. This proves
  part (b).
\end{proof}

\section{Reformulation of the characteristic zero result}\noindent
As in Section \ref{sec:ss}, we still assume that $\Bbbk$ is a field of
characteristic zero. Our standing assumption that $d > r+1$ remains in
force. Finally, we remind the reader that $d=n-2\eps$, where $\eps \in
\{0, 1/2\}$.

It is straightforward  to see by looking at examples that the $y$-basis
$\mathcal{Y}$  (of Theorem~\ref{thm:Murphy})
\color{black} is the right one to use in order to describe the kernel
of $\Phi_{n,r+\eps}$ as a cell ideal. For instance, if $n=3$ and $r=1$
then it is easy to check that the map $\Phi_{3,1}: \Bbbk W_3
\to \End(\bV)$ has kernel generated by the element
\[
  y_{\stt^{(3)}\stt^{(3)}} = \textstyle \sum_{w \in W_3} \sgn(w)\, w.  
\]
Note that this example is compatible with Proposition
\ref{prop:kernel0-eq}(a), which says that the kernel is isomorphic to
$\End_\Bbbk(S^{(1^3)}) = \End_\Bbbk(\Delta(3))$.

From now on we make heavy use of the fact that transposing reverses
the dominance order: $\lambda \dom \mu$ if and only if $\lambda^\tr
\lessdom \mu^\tr$.    
We now reformulate Proposition \ref{prop:kernel0-eq}
by replacing $\lambda$ by its transpose $\lambda^\tr$, in light of the
identification $\Delta(\lambda) = S^{\lambda^\tr}$ from Theorem
\ref{thm:Murphy}.  Then Proposition \ref{prop:kernel0-eq}(a) takes the
form
\begin{equation}
  \ker \Phi_{n,r+\eps} \cong \textstyle \bigoplus_{\lambda^\tr \notdom
    \alpha(d,r)} \End_\Bbbk(S^{\lambda^\tr}) = \textstyle
  \bigoplus_{\lambda \notlessdom \alpha(d,r)^\tr} \End_\Bbbk(\Delta(\lambda))
\end{equation}
where the rightmost equality follows from the equivalence
\[
\lambda^\tr \notdom \alpha(d,r) \iff \lambda \notlessdom \alpha(d,r)^\tr.
\]
Note that $\alpha(d,r)^\tr = (r+1, 1^{d-r-1}) =
\alpha(d,d-r-1)$. Combining the above with the equality
\begin{equation}
\ann_{\Bbbk W_d} \bV^{\otimes r} = \bigcap_{\lambda \vdash d,
  \, \lambda \dom \alpha(d,r)} \ann_{\Bbbk W_d} M^\lambda
\end{equation}
in Proposition \ref{prop:kernel0-eq}(b), we obtain the following.

\begin{prop}\label{prop:reformulation}
  Assume that $\Bbbk$ is a field of characteristic zero. Let $d =
  n-2\eps$ and assume that $d > r+1$, where $\eps \in \{0,1/2\}$.
  Then
  \[
    \ker \Phi_{n,r+\eps} = \bigcap_{\lambda \vdash d, \, \lambda \dom
      \alpha(d,r)} \ann_{\Bbbk W_d} M^\lambda = \Bbbk \{
    y_{\sts\stt}\mid [\sts]=[\stt] \notlessdom \alpha(d,r)^\tr\}.
  \]
\end{prop}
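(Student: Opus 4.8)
The plan is to identify the annihilator $\bigcap_{\lambda \dom \alpha(d,r)} \ann_{\Bbbk W_d} M^\lambda$ explicitly as the span of a set of $y$-basis elements, using the cellular structure from Theorem~\ref{thm:Murphy}. The first equality is already established in Proposition~\ref{prop:kernel0-eq}(b), so the content is the second equality. The key principle is that for a cellular algebra ordered by the dominance order, annihilators of cell (Specht) modules are spanned by cellular basis elements whose shape is sufficiently far down the order. Since the $y$-basis has cell modules $\Delta(\mu) = S^{\mu^\tr}$ ordered by reverse dominance, I would translate all statements about $\ann M^\lambda$ into the $y$-basis language and then match up the index sets combinatorially via transposition.

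\textbf{Key steps.} First I would recall that $M^\lambda = \Bbbk \Symset_d x_\lambda$ has a filtration by dual Specht modules (equivalently, its annihilator is controlled by which Specht modules occur as composition factors), and by Young's rule $[M^\lambda : S^\mu] \ne 0$ only when $\mu \dom \lambda$. In the semisimple setting this means $\ann_{\Bbbk W_d} M^\lambda$ is exactly the sum of the Wedderburn blocks $\End_\Bbbk(S^\mu)$ with $\mu \notdom \lambda$. Second, I would intersect over all $\lambda \dom \alpha(d,r)$: an element lies in the intersection iff it annihilates every such $M^\lambda$, which (passing through the blocks) happens iff it lies in $\bigoplus_{\mu} \End_\Bbbk(S^\mu)$ where $\mu \notdom \lambda$ for all $\lambda \dom \alpha(d,r)$ — and since $\alpha(d,r)$ is the minimum of its up-set, this condition simplifies to $\mu \notdom \alpha(d,r)$. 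Third, I would transpose: writing $\mu = \nu^\tr$, the condition $\mu \notdom \alpha(d,r)$ becomes $\nu \notlessdom \alpha(d,r)^\tr$, exactly matching the displayed index set. Finally, I would invoke the cellular property: the two-sided ideal spanned by $\{y_{\sts\stt} : [\sts]=[\stt] \notlessdom \alpha(d,r)^\tr\}$ is precisely $\bigoplus_{\nu \notlessdom \alpha(d,r)^\tr} \End_\Bbbk(\Delta(\nu))$ as a subspace of $\Bbbk W_d$, because in the semisimple case each cell yields one Wedderburn block and the $y_{\sts\stt}$ for a fixed shape $\nu$ form a basis of $\End_\Bbbk(\Delta(\nu))$.

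\textbf{Main obstacle.} The delicate point is verifying that the span $\Bbbk\{y_{\sts\stt} : [\sts]=[\stt]\notlessdom \alpha(d,r)^\tr\}$ coincides as a \emph{subset of $\Bbbk W_d$} with the Wedderburn ideal $\bigoplus_{\nu \notlessdom \alpha(d,r)^\tr}\End_\Bbbk(\Delta(\nu))$, not merely that the two have the same dimension or are abstractly isomorphic. This requires the standard fact that in a split semisimple cellular algebra the cellular basis refines the Wedderburn decomposition: the elements $y_{\sts\stt}$ of a single shape $\nu$ span exactly the matrix block $\End_\Bbbk(\Delta(\nu))$. I would establish this by noting that the cell-module structure constants force $y_{\sts\stt} \cdot y_{\stu\stv} \in \Bbbk\, y_{\sts\stv} + (\text{lower terms})$, and that in the semisimple case the lower terms vanish on the relevant quotients, so each shape's span is a two-sided ideal isomorphic to a full matrix algebra. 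Care is needed to ensure the reverse-dominance convention of Theorem~\ref{thm:Murphy} is applied consistently, so that ``down the order'' for cells corresponds correctly to ``$\notlessdom \alpha(d,r)^\tr$'' and hence to the genuinely small Wedderburn blocks; a sign error in the order convention would invert the whole statement.
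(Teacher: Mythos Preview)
Your proposal is correct and follows essentially the same route as the paper: cite Proposition~\ref{prop:kernel0-eq}(b) for the first equality, rewrite the Wedderburn description of the kernel from Proposition~\ref{prop:kernel0-eq}(a) via the transposition $\lambda \mapsto \lambda^\tr$ (so that $\mu \notdom \alpha(d,r)$ becomes $\nu \notlessdom \alpha(d,r)^\tr$), and then identify the resulting sum of blocks with the span of the relevant $y_{\sts\stt}$. The paper compresses all of this into the remarks preceding the statement and a one-line proof; you are simply being more explicit about the intermediate steps, and in particular you correctly flag as the ``main obstacle'' the identification of the cell ideal with the Wedderburn sum, which the paper treats as a known observation about semisimple cellular algebras.
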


\begin{proof}
Most of the proof is in the remarks preceding the statement. To
complete the proof, we only need to observe that in the semisimple
case, the cell ideal spanned by all $y_{\sts\stt}$ for $[\sts] = [\stt]
\notlessdom \alpha(d,r)^\tr$ is isomorphic to $\bigoplus_{\lambda
  \notlessdom \alpha(d,r)^\tr} \End_\Bbbk(\Delta(\lambda))$.
\end{proof}

Note the similarity, and the difference, with Lemma 3 of
\cite{Haerterich}. Later we will show that the description of $\ker
\Phi_{n,r+\eps}$ in the rightmost equality of Proposition
\ref{prop:reformulation} is characteristic-free.

The following characterisation of the key inequalities in the above
proposition will be useful in the next section. We write
$\rows(\lambda)$, $\cols(\lambda)$ for the number of rows, columns in
the Young diagram of $\lambda$, respectively.

\begin{lem}\label{lem:kernel-ideal}
  Let $\lambda \vdash d$. Then:

  (a) $\lambda \dom \alpha(d,r) \iff \cols(\lambda) \ge d-r$.

  (b) $\lambda \lessdom \alpha(d,r)^\tr \iff \rows(\lambda) \ge d-r$. 
\end{lem}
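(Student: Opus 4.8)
The plan is to prove (a) directly from the dominance partial-sum inequalities, and then to deduce (b) from (a) using the standard fact that transposition reverses the dominance order. Throughout I recall that $\alpha(d,r) = (d-r, 1^r)$, so $\alpha(d,r)^\tr = (r+1, 1^{d-r-1})$, and that $\cols(\lambda) = \lambda_1$ while $\rows(\lambda) = \ell(\lambda) = \cols(\lambda^\tr)$. I will also use the definition that $\lambda \dom \mu$ means $\sum_{i=1}^k \lambda_i \ge \sum_{i=1}^k \mu_i$ for every $k$.

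For (a), the forward implication is immediate: taking $k=1$ in the dominance inequalities gives $\cols(\lambda) = \lambda_1 \ge d-r$, the first part of $\alpha(d,r)$. For the converse, suppose $\lambda_1 \ge d-r$. The key observation is that then $\sum_{i \ge 2} \lambda_i = d - \lambda_1 \le r$, so $\lambda$ has at most $r$ boxes outside its first row and hence $\ell(\lambda) \le r+1$. The partial sums of $\alpha(d,r)$ are $(d-r)+(k-1)$ for $1 \le k \le r+1$ and equal $d$ thereafter. For $1 \le k \le \ell(\lambda)$ I would bound $\sum_{i=1}^k \lambda_i = \lambda_1 + (\lambda_2 + \cdots + \lambda_k) \ge (d-r)+(k-1)$, using $\lambda_1 \ge d-r$ together with $\lambda_2, \dots, \lambda_k \ge 1$; for $k \ge \ell(\lambda)$ one has $\sum_{i=1}^k \lambda_i = d$, which is at least the corresponding partial sum of $\alpha(d,r)$ precisely because $\ell(\lambda) \le r+1$. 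This establishes $\lambda \dom \alpha(d,r)$.

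For (b), I would invoke the elementary fact that $\lambda \lessdom \mu$ if and only if $\mu^\tr \lessdom \lambda^\tr$; equivalently $\lambda \lessdom \alpha(d,r)^\tr \iff \lambda^\tr \dom \alpha(d,r)$. Applying part (a) to $\lambda^\tr$ gives $\lambda^\tr \dom \alpha(d,r) \iff \cols(\lambda^\tr) \ge d-r$, and since $\cols(\lambda^\tr) = \rows(\lambda)$ the claim follows at once.

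The only delicate point, and the main obstacle, is the bookkeeping in the converse of (a): one must notice that the hypothesis $\lambda_1 \ge d-r$ forces $\ell(\lambda) \le r+1$, which is exactly what is needed to handle the partial sums indexed by $k$ beyond the length of $\lambda$, where $\alpha(d,r)$ has already reached its total $d$. Once this length bound is in hand, every case of the partial-sum comparison is routine.
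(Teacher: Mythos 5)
Your proposal is correct and follows essentially the same route as the paper: part (a) is read off from the partial-sum inequalities against $\alpha(d,r)=(d-r,1^r)$, and part (b) is deduced from (a) via the order-reversal of dominance under transposition together with $\cols(\lambda^\tr)=\rows(\lambda)$. Your write-up of the converse in (a) is in fact more careful than the paper's (which simply asserts that the first inequality $\lambda_1\ge d-r$ implies the rest); the length bound $\ell(\lambda)\le r+1$ you extract is exactly the point that makes this work.
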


\begin{rmk}\label{rmk:b-interp}
  Part (b) implies that $\lambda \notlessdom
  \alpha(d,r)^\tr \iff \rows(\lambda) < d-r$.
\end{rmk}

\begin{proof}
(a) Since $\alpha(d,r) = (d-r, 1^r)$, it follows by the definition of
  the dominance order that $\lambda \dom \alpha(d,r)$ if and only if
\[
  \lambda_1 \ge d-r,\quad \lambda_1+\lambda_2 \ge d-r+1, \quad \dots, \quad
  \lambda_1+ \cdots + \lambda_{r+1} \ge d.
\]
Since $\lambda_1 = \cols(\lambda)$, it follows that $\lambda$ has at least
$d-r$ columns if and only if $\lambda \dom \alpha(d,r)$. This proves (a).  

(b) This follows from (a), since for $\mu,\nu \vdash d$ we have $\mu
\dom \nu \iff \mu^\tr \lessdom \nu^\tr$. First, replace $\lambda$ with $\mu$
in (a) to get $\mu \dom \alpha(d,r) \iff \cols(\mu) \ge d-r$. This
is equivalent to the statement
\begin{align*}
\mu^\tr \lessdom \alpha(d,r)^\tr \iff \cols(\mu) \ge d-r.
\end{align*}
Using the fact that $\cols(\mu) = \rows(\mu^\tr)$, we obtain the desired
result by setting $\lambda = \mu^\tr$ in the above.
\end{proof}

Suppose that $\Omega$ is any set of partitions of $d$. For convenience
of notation, we set $A = \Bbbk W_d$. Following Graham and Lehrer
\cite{Graham-Lehrer}, we define
\[
  A^y[\Omega] = \sum_{\lambda \in \Omega: [\sts] = [\stt] = \lambda}
  \Bbbk y_{\sts\stt} .
\]
A set $\Omega$ of partitions of $d$ is an \emph{ideal} if $\lambda \in
\Omega$ and $\mu \dom \lambda$ (for $\mu \vdash d$) always implies
$\mu \in \Omega$.  It is clear that the set
\begin{equation}\label{fortheref}
  \Omega = \{ \lambda \vdash d \mid \lambda \notlessdom \alpha(d,r)^\tr \}
\end{equation}
is an ideal, because if $\lambda$ is a partition of strictly
fewer than $d-r$ parts and if $\mu \dom \lambda$ then the diagram of
$\mu$ is obtained from the diagram of $\lambda$ by moving boxes up,
which cannot increase the number of rows. Alternatively, if $\lambda
\notlessdom \alpha(d,r)^\tr$ and if $\mu \dom \lambda$ then the
assumption $\mu \lessdom \alpha(d,r)^\tr$ implies that $\lambda \lessdom
\mu \lessdom \alpha(d,r)^\tr$, which forces $\lambda \lessdom
\alpha(d,r)^\tr$, a contradiction.

The importance of this is that Graham and Lehrer proved that the set
$A^y[\Omega]$ is an ideal in $A$ whenever $\Omega$ is an ideal.

\begin{prop}[\cite{Graham-Lehrer}*{Lemma~(1.5)}] \label{prop:GL}
For $\Omega = \{ \lambda \vdash d \mid \lambda \notlessdom
\alpha(d,r)^\tr \}$ as above, 
\[
A^y[\Omega] = A^y[\notlessdom \alpha(d,r)^\tr]
\] 
is a cell ideal in $A = \Bbbk W_d$, with basis $\{y_{\sts \stt}: \sts,
\stt \text{ standard}, [\sts]=[\stt] \notlessdom \alpha(d,r)^\tr\}$.
\end{prop}

The cell ideal $A^y[\notlessdom \alpha(d,r)^\tr]$ is equal to $\ker
\Phi_{n,r+\eps}$ in Proposition \ref{prop:reformulation}.

\section{The annihilator in general}\label{sec:ann-gen}\noindent
Now we revert back to the general case, where $\Bbbk$ is once again an
arbitrary commutative ring. Note that Proposition \ref{prop:GL} holds
in this generality. We continue to set $A = \Bbbk W_d$.

Let $v = \bv_{i_1} \otimes \cdots \otimes \bv_{i_l}$ in $H_d(l)$ be a
simple tensor with distinct tensor factors (i.e., $i_\alpha \ne
i_\beta$ for $1 \le \alpha \ne \beta \le l$). We start by computing
$y_\lambda \cdot v$, where $\lambda \vdash d$ and $1 \le l \le
d$. (See equation \eqref{eq:Murphy-basis} for the definition of
$y_\lambda$.)  Let
\[
W_d^v =\{w \in W_d \mid w\cdot v = v\}
\]
be the stabiliser of $v$ in $W_d$. Clearly we have $W_d^v = W_B$ where
$B = \{1, \dots, d\} \setminus \{i_1, \dots, i_l\}$. Let $W_\lambda$ be the
Young subgroup determined by $\lambda$, and write it as $W_{C_1}
\times \cdots \times W_{C_k}$, where $\lambda$ has $k$ parts. Here
$C_j$ is the subset of $\{1, \dots, d\}$ defined by the numbers in the
$j$th row of $\stt^\lambda$. Note that $\{ C_1, \dots, C_k \}$ is a
set partition of $\{1, \dots, d\}$. We have
\begin{equation}\label{eq:stab-of-v}
  W_\lambda \cap W_B = (W_{C_1} \times \cdots \times W_{C_k}) \cap W_B
  = W_{C_1 \cap B} \times \cdots \times W_{C_k \cap B}.
\end{equation}
The stabiliser of $v$ in $W_\lambda$ is $S = W_\lambda \cap W_d^v =
W_\lambda \cap W_B$ as above. Fix a left coset decomposition
\[
w_1 S \sqcup w_2 S \sqcup \cdots \sqcup w_t S = W_\lambda
\]
of $W_\lambda/S$, where $\{w_1, \dots, w_t\}$ is a set of coset
representatives. Then by the definition of $y_\lambda$ we have
\begin{equation*}
  y_\lambda \cdot v = \sum_{i=1}^t \sum_{s \in S} \sgn(w_is)\, w_is \cdot v
  = \sum_{i=1}^t \sgn(w_i) \big( \sum_{s \in S} \sgn(s) \big)\, w_i \cdot v.
\end{equation*}
This proves that if we set  $F_S = \sum_{s \in S} \sgn(s)$ then we have
\begin{equation}
  y_\lambda \cdot v = F_S \, \sum_{i=1}^t \sgn(w_i) \, w_i \cdot v.
\end{equation}
We note that $F_S$ is a scalar depending only on $S$. Since $S$ is a
product of symmetric groups by \eqref{eq:stab-of-v}, it follows that
$F_S$ is either zero or one, with the latter case occurring if and only
if $S$ is the trivial subgroup. We have shown that:
\begin{equation}\label{eq:when-zero}
  S = W_d^v \cap W_\lambda \ne \{1\} \implies y_\lambda \cdot v = 0. 
\end{equation}

From the above analysis, we now obtain a lower bound for the
annihilator of $\bV^{\otimes r}$ as a $\Bbbk W_d$-module, where
$d=n-2\eps$ as usual. Later we will show the bound is precise.

\begin{prop}\label{prop:containment}
Let $\Bbbk$ be a commutative ring and assume that $d>r+1$. Let
$d=n-2\eps$, where $\eps \in \{0,1/2\}$.  Then the kernel of
$\Phi_{n,r+\eps}$ contains the cell ideal $A^y[\notlessdom
  \alpha(d,r)^\tr]$, where $A = \Bbbk W_d$.
\end{prop}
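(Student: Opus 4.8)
The plan is to show that every basis element $y_{\sts\stt}$ spanning the cell ideal $A^y[\notlessdom \alpha(d,r)^\tr]$ acts as zero on $\bV^{\otimes r}$, so that the whole ideal lies in $\ker\Phi_{n,r+\eps}$. By Corollary \ref{cor:decomps} (valid for arbitrary $\Bbbk$), together with the hypothesis $d>r+1$ which gives $\min(d,r)=r$, we have a left $\Bbbk W_d$-module decomposition of $\bV^{\otimes r}$ into copies of $H_d(l)$ for $1 \le l \le r$. Hence it suffices to prove that each such $y_{\sts\stt}$ annihilates $H_d(l)$ for every $1 \le l \le r$. Throughout I would use the remark following Lemma \ref{lem:kernel-ideal}, namely that $[\sts]=[\stt]=\lambda \notlessdom \alpha(d,r)^\tr$ is equivalent to $\rows(\lambda) < d-r$; write $k=\rows(\lambda)$ for the number of parts of $\lambda$.

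The next step is to reduce the action of $y_{\sts\stt}$ to that of $y_\lambda$. Each $H_d(l)$ is spanned by the simple tensors $w\cdot(\bv_{d-l+1}\otimes\cdots\otimes\bv_d)$ with $w \in W_d$, all of which have $l$ \emph{distinct} tensor factors. Writing $y_{\sts\stt} = d(\sts)^{-1}\, y_\lambda\, d(\stt)$ and observing that $d(\stt)\in W_d$ carries such a simple tensor $v$ to another simple tensor $d(\stt)\cdot v$ of the same form (distinct factors, still lying in the $W_d$-stable module $H_d(l)$), it is enough to show $y_\lambda\cdot v = 0$ for every simple tensor $v\in H_d(l)$ with distinct factors; the leftover factor $d(\sts)^{-1}$ then sends $0$ to $0$.

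For this final reduction I would invoke equation \eqref{eq:when-zero}, which asserts that $y_\lambda\cdot v = 0$ whenever the stabiliser $S = W_d^v \cap W_\lambda$ is non-trivial. With $v = \bv_{i_1}\otimes\cdots\otimes\bv_{i_l}$ having distinct factors we have $W_d^v = W_B$ for $B = \{1,\dots,d\}\setminus\{i_1,\dots,i_l\}$, so $|B| = d-l \ge d-r$. Writing $W_\lambda = W_{C_1}\times\cdots\times W_{C_k}$ as in \eqref{eq:stab-of-v}, the stabiliser is $S = W_{C_1\cap B}\times\cdots\times W_{C_k\cap B}$, which is trivial exactly when every $|C_j\cap B|\le 1$, forcing $|B| = \sum_j |C_j\cap B| \le k$. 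But the hypothesis $\rows(\lambda)=k<d-r$ yields $k < d-r \le |B|$, a contradiction; hence $S\ne\{1\}$ and $y_\lambda\cdot v = 0$, as required.

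The crux of the argument is this counting step, and the point to check carefully is that the standing assumption $d>r+1$ (equivalently $d-r\ge 2$) combined with the constraint $l\le r$ makes the inequality $|B| = d-l \ge d-r > k$ genuinely strict for every relevant $l$ and every simple tensor with distinct factors in $H_d(l)$, so that the stabiliser $S$ always contains a nontrivial symmetric group factor. Everything else is a formal assembly of the decomposition of $\bV^{\otimes r}$, the reduction from $y_{\sts\stt}$ to $y_\lambda$, and the vanishing criterion \eqref{eq:when-zero} established just above.
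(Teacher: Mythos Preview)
Your proof is correct and follows essentially the same route as the paper's own argument: reduce to the summands $H_d(l)$ via Corollary~\ref{cor:decomps}, strip off the outer factors $d(\sts)^{-1}$ and $d(\stt)$ to reduce from $y_{\sts\stt}$ to $y_\lambda$, and then apply the pigeonhole count $|B|=d-l\ge d-r>k$ to force the stabiliser $S$ to be nontrivial, invoking~\eqref{eq:when-zero}. Your write-up is in fact a bit more explicit than the paper's at two points---you spell out why $d(\stt)\cdot v$ remains a simple tensor with distinct factors in $H_d(l)$, and you phrase the counting step as a clean contrapositive---but the underlying argument is identical.
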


\begin{proof} 
Let $\lambda \vdash d$ have fewer than $d-r$ parts. In light of
Corollary \ref{cor:decomps}, we need to show that $y_{\sts\stt} \cdot
v = 0$, for any $v \in H_d(l)$, where $1 \le l \le r-2\eps$. Since
$y_{\sts\stt} = d(\sts) y_\lambda d(t)^{-1}$ it clearly suffices to
show that $y_\lambda \cdot v = 0$ for any such $v$.  By
\eqref{eq:when-zero}, this will follow if we can show that $W_d^v \cap
W_\lambda$ is not trivial. By \eqref{eq:stab-of-v}, we have
\[
W_d^v \cap W_\lambda = W_\lambda \cap W_B = W_{C_1\cap B} \times \cdots
\times W_{C_k\cap B}
\]
where $W_\lambda = W_{C_1} \times \cdots \times W_{C_k}$ and $B = \{1,
\dots, d\} \setminus \{i_1, \dots, i_l\}$. Since $\{1, \dots, d\} =
C_1 \sqcup \cdots \sqcup C_k$, we have
\[
B = (C_1 \cap B) \sqcup \cdots \sqcup (C_k \cap B).
\]
Furthermore, we have $|B| = d-l$. The condition $l \le r$ (from above)
forces
\[
|B| = d-l \ge d-r > k.
\]
Hence at least one $C_j \cap B$ has more than one element, and thus
$W_d^v \cap W_\lambda$ is not trivial.
\end{proof}

To obtain the opposite inclusion, we will adapt a result of
H\"{a}rterich.
 
\begin{prop} \label{prop:Haer-for-us}
  Suppose that $\Bbbk$ is a commutative ring.  For any $\mu \vdash d$, 
  \[
    \bigcap_{\lambda \vdash d, \, \lambda \dom \mu} \ann_{A}
    M^{\lambda} = A^y[\notlessdom \mu^\tr], \text{with basis }
    \{y_{\sts\stt} \mid \sts, \stt \text{ standard}, [\sts] =[\stt]
    \notlessdom \mu^\tr \}.
  \]
\end{prop}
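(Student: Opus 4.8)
The plan is to prove the two inclusions separately, both characteristic-free. Throughout set $\Omega = \{\nu\vdash d : \nu\notlessdom\mu^\tr\}$, so that the asserted span is the cell ideal $A^y[\Omega]$, which is upward-closed (hence an ideal, by Proposition \ref{prop:GL}) by the argument preceding that proposition.

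For the inclusion $A^y[\Omega]\subseteq\bigcap_{\lambda\dom\mu}\ann_A M^\lambda$, I would generalise Proposition \ref{prop:containment} from the hook modules $H_d(l)\cong M^{(d-l,1^l)}$ to an arbitrary $M^\lambda$. Fix $y_{\sts\stt}$ with $[\sts]=[\stt]=\nu\notlessdom\mu^\tr$ and $\lambda\dom\mu$; since $y_{\sts\stt}=d(\sts)^{-1}y_\nu d(\stt)$ and $M^\lambda$ is a left module, it suffices to show $y_\nu M^\lambda=0$. Running the computation behind \eqref{eq:when-zero} on the tabloid basis of $M^\lambda$, the element $y_\nu$ kills a tabloid whose row stabiliser meets $W_\nu$ nontrivially, the relevant scalar $F_S$ being $0$ or $1$ in \emph{any} ring; hence $y_\nu M^\lambda=0$ as soon as $W_\nu\cap\tau W_\lambda\tau^{-1}\ne\{1\}$ for every $\tau\in W_d$. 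By \eqref{eq:stab-of-v} this intersection is a product of symmetric groups, and it is trivial for some $\tau$ exactly when there is a $0$--$1$ matrix with row sums $\nu$ and column sums $\lambda$, which by the Gale--Ryser theorem occurs iff $\lambda\lessdom\nu^\tr$. Finally $\nu\notlessdom\mu^\tr$ together with $\lambda\dom\mu$ forces $\lambda\notlessdom\nu^\tr$ (otherwise $\nu\lessdom\lambda^\tr\lessdom\mu^\tr$), giving the claim.

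For the reverse inclusion I would argue directly, rather than passing to composition factors (where the possible degeneracy of the cellular form obstructs the argument). Let $a\in\bigcap_{\lambda\dom\mu}\ann_A M^\lambda$ and write $a=\sum_\nu a_\nu$ for its decomposition into $y$-homogeneous parts. Assuming $a\notin A^y[\Omega]$, choose $\nu$ minimal in the dominance order among those shapes with $\nu\lessdom\mu^\tr$ and $a_\nu\ne0$. Every other shape $\rho$ occurring in $a$ then satisfies $\rho\notlessdom\nu$: a shape with $\rho\notlessdom\mu^\tr$ automatically has $\rho\notlessdom\nu$ (since $\rho\lessdom\nu\lessdom\mu^\tr$ would give $\rho\lessdom\mu^\tr$), while the remaining shapes with $\rho\lessdom\mu^\tr$ are excluded by minimality. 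Since $\nu\lessdom\mu^\tr$ gives $\nu^\tr\dom\mu$, the module $M^{\nu^\tr}$ is one of those annihilated by $a$. The same vanishing criterion as above, now with $M^{\nu^\tr}$ in place of $M^\lambda$, shows $y_\rho\,\sigma\,x_{\nu^\tr}=0$ for all $\sigma\in W_d$ whenever $\rho\notlessdom\nu$; hence $a_\rho(b\,x_{\nu^\tr})=0$ for every $\rho\ne\nu$ occurring in $a$ and every $b\in A$. Therefore $0=a(b\,x_{\nu^\tr})=a_\nu(b\,x_{\nu^\tr})$ for all $b$, that is, $a_\nu M^{\nu^\tr}=0$.

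It remains to show that a $y$-homogeneous element of shape $\nu$ annihilating $M^{\nu^\tr}$ must vanish; this is the heart of the matter, and the place where I would adapt H\"arterich. Left multiplication by a shape-$\nu$ basis element carries $M^{\nu^\tr}$ into a copy of the Specht submodule $S^{\nu^\tr}=\Delta(\nu)$, so one is really claiming that the operators $\{y_{\sts\stt}\mid\sts,\stt\text{ standard of shape }\nu\}$ on $M^{\nu^\tr}$ are $\Bbbk$-linearly independent. I would establish this by a leading-term analysis with respect to the dominance order on tabloids: using the $W_d$-invariant nondegenerate bilinear form on the permutation module $M^{\nu^\tr}$ (for which distinct tabloids are orthonormal) together with the Standard Basis Theorem for Specht modules --- both valid over any ring --- one pairs $a_\nu(b\,x_{\nu^\tr})$ against tabloids and peels off the coefficients $c_{\sts\stt}$ one diagonal at a time, controlling the interaction of $d(\sts),d(\stt)$ with the ordering. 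The main obstacle is precisely this integral triangularity statement: over a field of positive characteristic the cellular form on $\Delta(\nu)$ may be degenerate, so the cruder argument ``$a_\nu$ annihilates the cell module'' is insufficient, and it is the faithful action on the whole permutation module $M^{\nu^\tr}$ that makes the argument work uniformly.
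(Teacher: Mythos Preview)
Your proposal is correct and follows essentially the same route as the paper, which does not give a self-contained argument but defers to H\"arterich's Lemma~3 together with Murphy's Lemmas~4.12 and~4.16 of \cite{Murphy95}, proceeding by induction on the dominance order on (pairs of) tableaux. Your Gale--Ryser packaging of the vanishing criterion for the easy inclusion is a pleasant alternative to the direct stabiliser computation, and the hard step you isolate---linear independence of the shape-$\nu$ operators $y_{\sts\stt}$ on $M^{\nu^\tr}$---is precisely what Murphy's lemmas supply for the induction.
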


\begin{proof}
  (Compare with the proof of \cite{Haerterich}*{Lemma 3}. We are
  setting $q=1$ and replacing right modules with left ones.)
Let $\Omega$ be the set of all $\lambda \vdash d$ such that $\lambda
\notlessdom \mu^\tr$. 
 The set $\Omega $ is that of equation \eqref{fortheref},   
where we remark that this
 is an ideal in the poset of partitions of $d$.  
 \color{black}
 It follows from
\cite{Murphy95}*{Lemma 4.12} that $A^y[\Omega]$ is contained in
$\bigcap_{\lambda \vdash d, \, \lambda \dom \mu} \ann_{A}
M^{\lambda}$, so we need only prove the reverse containment.

Suppose that $h^\# = \sum_{(\sts,\stt)} \alpha_{\sts\stt}
y_{\sts\stt}$ belongs to $\bigcap_{\lambda \vdash d, \, \lambda \dom
  \mu} \ann_{A} M^{\lambda}$. We need to show that $\alpha_{\sts\stt}
\ne 0$ implies $[\sts]=[\stt] \notlessdom \mu^\tr$. Let $(\sts_0,
\stt_0)$ be a \emph{minimal} pair (with respect to $\dom$) such that
$\alpha_{\sts_0\stt_0} \ne 0$. Then $\alpha_{\sts\stt} = 0$ for all
pairs $(\sts,\stt) \slessdom(\sts_0,\stt_0)$, and since the set
$\Omega$ is an ideal in the poset, it suffices to show that $[\sts_0]
= [\stt_0] \notlessdom \mu^\tr$.

Let $\lambda_0^\tr = [\sts] = [\stt]$. The calculation in the proof of
\cite{Haerterich}*{Lemma 3} shows that $h^\# x_{\sts_0^\tr\lambda_0}
\ne 0$.  Since $h^\# \in \bigcap_{\lambda \vdash d, \, \lambda \dom
  \mu} \ann_{A} M^{\lambda}$, it follows that $\lambda_0 \notdom
\mu$. Equivalently, $\lambda_0^\tr \notlessdom \mu^\tr$, so $[\sts_0]
= [\stt_0] \notlessdom \mu^\tr$, as required.
\end{proof}

To finish, we will specialise $\mu$ to $\alpha(d,r)$ in Proposition
\ref{prop:Haer-for-us}. We also need one more fact. We need to show
that the intersection of annihilators in Proposition
\ref{prop:Haer-for-us} in the case $\mu = \alpha(d,r)$ is in fact the
same as the annihilator of $\bV^{\otimes r}$. This is far from
obvious, although we already proved this is so if $\Bbbk$ is a field
of characteristic zero. If Lemma \ref{lem:embedding-ss-case} were true
over any commutative ground ring $\Bbbk$ then we would be able to
apply the same argument that produced Proposition
\ref{prop:kernel0-eq}(a), but unfortunately there is an example in
\cite{Doty-Nyman} showing that Lemma \ref{lem:embedding-ss-case} can
fail in positive characteristic.

It turns out, however, that there is a version of Lemma
\ref{lem:embedding-ss-case}, valid for any commutative ring $\Bbbk$, in
the special case where the second partition is taken to be
$\alpha(d,r)$.

\begin{prop}\label{prop:embedding-hook}
  For any commutative ring $\Bbbk$, and any $\lambda \vdash d$ such
  that $\lambda \dom \alpha(d,r)$, we have an embedding $M^\lambda
  \subseteq M^{\alpha(d,r)}$, as $\Bbbk W_d$-modules.
\end{prop}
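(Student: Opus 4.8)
The plan is to construct the embedding $M^\lambda \subseteq M^{\alpha(d,r)}$ explicitly. Recall that $\alpha(d,r) = (d-r, 1^r)$ is a hook, and by Lemma~\ref{lem:kernel-ideal}(a) the condition $\lambda \dom \alpha(d,r)$ means exactly that $\cols(\lambda) = \lambda_1 \ge d-r$; equivalently, $\lambda$ has at most $r+1$ rows. The key structural feature of a hook is that $M^{(d-r,1^r)} = \ind_{W_{(d-r,1^r)}}^{W_d} \Bbbk$ is induced from the Young subgroup $W_{(d-r,1^r)} = \Symset_{d-r} \times \Symset_1 \times \cdots \times \Symset_1$, which is just $\Symset_{d-r}$ embedded as the stabiliser of the row structure. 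So $M^{\alpha(d,r)}$ has tabloid basis indexed by the ways of singling out $r$ of the $d$ points (in an ordered fashion respecting the column standardness of the hook), and is in particular large.

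First I would reduce to a statement about permutation modules via the standard correspondence $M^\mu = \Bbbk \Symset_d x_\mu \cong \ind_{W_\mu}^{W_d}\Bbbk$, so that a $\Bbbk W_d$-module homomorphism $M^\lambda \to M^{\alpha(d,r)}$ is determined by the image of the generator $x_\lambda$, which must be a $W_\lambda$-invariant element of $M^{\alpha(d,r)}$. By Frobenius reciprocity / the theory of permutation-module homomorphisms (as in \cite{James}), such homomorphisms correspond to $\Bbbk$-linear combinations of $(W_\lambda, W_{\alpha(d,r)})$-double-coset sums, and the map is injective precisely when a suitable coefficient is a unit. Concretely, I would define $\theta\colon M^\lambda \to M^{\alpha(d,r)}$ by sending the tabloid $\{\stt^\lambda\}$ to the sum of all $\alpha(d,r)$-tabloids compatible with the rows of $\lambda$ in the natural sense — that is, realise each $\lambda$-tabloid as a sum over the ways of refining it to a hook-tabloid — and then extend $\Bbbk W_d$-linearly.

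The main obstacle, and the step requiring genuine care, is verifying injectivity of $\theta$ over an \emph{arbitrary} ring $\Bbbk$, since over a field of characteristic zero this is just Lemma~\ref{lem:embedding-ss-case} but the general statement is exactly where that lemma fails for non-hook targets. The saving grace is that the target is a hook: I expect the matrix of $\theta$ relative to tabloid bases to be, after suitable ordering, unitriangular (or to have the identity as a leading term) with respect to the dominance order on tabloids, so that its determinant is $\pm 1$ and injectivity holds integrally. I would make this precise by showing that for each $\lambda$-tabloid there is a distinguished hook-tabloid appearing in its image with coefficient $1$ and not appearing in the image of any tabloid lower in the ordering; the combinatorics here is controlled because splitting the first row of $\lambda$ into a single long row plus $r$ singleton rows can be done in a canonical leading way. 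Once unitriangularity is established, injectivity is automatic over any $\Bbbk$, and the embedding $M^\lambda \subseteq M^{\alpha(d,r)}$ follows.

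Finally, I would record the immediate consequence used in the sequel: since $M^\lambda$ embeds in $M^{\alpha(d,r)}$ we get $\ann_{\Bbbk W_d} M^\lambda \supseteq \ann_{\Bbbk W_d} M^{\alpha(d,r)}$ for every $\lambda \dom \alpha(d,r)$, so that the intersection $\bigcap_{\lambda \dom \alpha(d,r)} \ann_{\Bbbk W_d} M^\lambda$ collapses to the single term $\ann_{\Bbbk W_d} M^{\alpha(d,r)}$, mirroring the characteristic-zero collapse in Proposition~\ref{prop:kernel0-eq}(b) but now valid over any integral domain.
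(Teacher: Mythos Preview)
Your approach is workable in principle but substantially more laborious than the paper's, and the key step---the unitriangularity of your map $\theta$---is asserted rather than proved. Your description of $\theta$ is also imprecise: ``splitting the first row of $\lambda$'' does not make sense for a general $\lambda$ with several rows (e.g.\ $\lambda=(2,2)$, $d=4$, $r=2$), so the map needs a cleaner definition before any leading-term argument can be carried out.

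The paper avoids all of this with a one-line algebraic observation. Using the left-ideal realisation $M^\nu \cong A x_\nu$ (where $A=\Bbbk W_d$), it suffices to show $x_\lambda \in A x_{\alpha(d,r)}$, since then $A x_\lambda \subseteq A x_{\alpha(d,r)}$ is literally an inclusion of left ideals and injectivity is automatic over any ring. But the Young subgroup of the hook $\alpha(d,r)=(d-r,1^r)$ is simply $W_{\{1,\dots,d-r\}}$ (the singleton parts contribute nothing), and since $\lambda_1\ge d-r$ by Lemma~\ref{lem:kernel-ideal}(a) this sits inside $W_{\{1,\dots,\lambda_1\}}\subseteq W_\lambda$. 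Writing $W_\lambda=\bigsqcup_i w_i W_{\alpha(d,r)}$ gives $x_\lambda=\big(\sum_i w_i\big)\,x_{\alpha(d,r)}$, and the embedding follows. (The paper cites \cite{Murphy95}*{Lemma~4.1} for this divisibility, but for a hook target the argument is elementary as above.)

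So the difference is this: you are trying to verify injectivity of a tabloid map by a combinatorial leading-term argument, whereas the paper sidesteps injectivity entirely by exhibiting $M^\lambda$ as a sub-ideal of $M^{\alpha(d,r)}$. The hook shape of $\alpha(d,r)$ is what makes $W_{\alpha(d,r)}\subseteq W_\lambda$ possible, and this is exactly the structural feature that fails for general targets and explains why Lemma~\ref{lem:embedding-ss-case} can fail in positive characteristic while the present proposition does not. Your final paragraph on the annihilator collapse is correct and matches how the paper uses the result.
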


\begin{proof}
Write $A = \Bbbk W_d$ as before. For any $\lambda \vdash d$, the
permutation module $M^\lambda$ is isomorphic to the left ideal $A
x_\lambda$. For ease of typography set $\mu = \alpha(d,r)$, and assume
that $\lambda \dom \mu$. Then by applying \cite{Murphy95}*{Lemma 4.1}
we see that $x_\mu$ is a left factor of $x_\lambda = x_{\stt\stt}$ for
$\stt = \stt^\lambda$ the canonical $\lambda$-tableaux; i.e., there is
some $z \in A$ such that $x_\lambda = z x_\mu$. This immediately
implies that $Ax_\lambda = Az x_\mu \subseteq A x_\mu$, as required.
\end{proof}

Now we finally obtain our main result. Recall that $\alpha(d,r)=(d-r,1^r)$.

\begin{thm}\label{thm:main}
Let $d=n-2\eps$, where $\eps \in \{0,1/2\}$.  For any commutative ring
$\Bbbk$, set $A = \Bbbk W_d$.
\begin{enumerate}\renewcommand{\labelenumi}{(\alph{enumi})}
\item The kernel of $\Phi_{n,r+\eps}$ over $\Bbbk$ is equal to the
  cell ideal
\[
A^y[\notlessdom \alpha(d,r)^\tr] = \Bbbk\{y_{\sts\stt} \mid \sts, \stt
\text{ standard}, [\sts] =[\stt] \notlessdom\alpha(d,r)^\tr \},
\]
that is, the $\Bbbk$-span of all $y_{\sts\stt} = y^\lambda_{\sts\stt}$
for which $\rows(\lambda) < d-r$. This spanning set is a cellular
basis of the ideal, which is thus free over $\Bbbk$ of rank which is
independent of $\Bbbk$.

\item The image in $A/(\ker \Phi_{n,r+\eps})$ of the set
\[
\Bbbk\{y_{\sts\stt} \mid \sts, \stt \text{ standard}, [\sts] =[\stt]
\lessdom\alpha(d,r)' \},
\]
that is, the $\Bbbk$-span of all $y_{\sts\stt} = y^\lambda_{\sts\stt}$
for which $\rows(\lambda) \ge d-r$, is a cellular basis of $\im
\Phi_{n,r+\eps}$, which is thus free over $\Bbbk$ of rank which is
independent of $\Bbbk$.

\item $\ker \Phi_{n,r+\eps} = \ann_{\Bbbk W_d} \bV^{\otimes r} =
  \ann_{\Bbbk W_d} M^{\alpha(d,r)}$.
\end{enumerate}
\end{thm}

\begin{proof}
(a) This is just a matter of putting the various pieces together. We
  use Proposition \ref{prop:embedding-hook} to show that the
  annihilator
\[
   \ann_A \bV^{\otimes r} = \bigcap_{\lambda \in \mathcal{H}(d,r)}
   \ann_A M^\lambda 
\] 
remains unchanged when we include extra terms of the form $\ann_A
M^\lambda$ for any $\lambda \dom \alpha(d,r)$. This implies that 
\[
   \ann_A \bV^{\otimes r} = \bigcap_{\lambda \vdash d, \,
     \lambda \dom \alpha(d,r)} \ann_A M^\lambda.
\]
Now the first claim in (a) follows from Proposition
\ref{prop:Haer-for-us}. The equality of the displayed set and its
alternative description as the $\Bbbk$-span of the
$y^\lambda_{\sts\stt}$ for which $\rows(\lambda) < d-r$ is Lemma
\ref{lem:kernel-ideal}; see Remark \ref{rmk:b-interp}.

(b) is an immediate consequence of (a) and the structure of cellular
algebras.

(c) follows from Proposition \ref{prop:embedding-hook}.
\end{proof}

\begin{example}Let $d=n-2\eps$, where $\eps \in \{0,1/2\}$.

(i) If $d-r \le 1$ then $\ker \Phi_{n,r+\eps} = 0$, by Corollary
  \ref{cor:faithful}.

(ii) If $d-r = 2$ then $\ker \Phi_{n,r+\eps} = \Bbbk
  y^{(d)}_{\stt\stt} = \Bbbk\sum_{w \in W_n} \sgn(w)\, w$, of rank 1,
  where $\stt=\stt^{(d)}$ is the unique standard tableau of shape
  $(d)$.

(iii) If $d-r = 3$ then $\ker \Phi_{n,r+\eps}$ is the $\Bbbk$-span of
  all $y^\lambda_{\sts\stt}$ (where $[\sts]=[\stt]=\lambda$) for which
  $\lambda$ has at most two rows.
\end{example}
%
%

By combining Theorem \ref{thm:main} with Schur--Weyl duality, we
obtain the following consequence.

\begin{cor}\label{cor:main}
  Let $\eps \in \{0,1/2\}$. For any commutative ring $\Bbbk$, the
  centraliser algebra $\End_{\Ptn_{r+\eps}(n)} (\bV^{\otimes r})$ is cellular.
\end{cor}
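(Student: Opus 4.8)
The plan is to combine Theorem \ref{thm:main} with the integral form of Schur--Weyl duality. By the Schur--Weyl duality established in the companion paper \cite{BDM:SWD}, over any unital ring $\Bbbk$ the image of the representation $\Phi_{n,r+\eps}\colon \Bbbk W_d \to \End_\Bbbk(\bV^{\otimes r})$ coincides with the centraliser algebra of the $\Ptn_{r+\eps}(n)$-action; that is,
\[
  \End_{\Ptn_{r+\eps}(n)}(\bV^{\otimes r}) = \im \Phi_{n,r+\eps},
\]
where $d = n-2\eps$. Thus it suffices to prove that $\im \Phi_{n,r+\eps}$ is a cellular algebra, and the entire question reduces to understanding this image.

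First I would dispose of the faithful range. If $d \le r+1$, then $\Phi_{n,r+\eps}$ is faithful by Corollary \ref{cor:faithful}, so $\im \Phi_{n,r+\eps} \cong \Bbbk W_d$, which is cellular by Murphy's Theorem \ref{thm:Murphy} (with anti-involution $w \mapsto w^{-1}$). This settles the corollary outside the standing hypothesis of the main theorem.

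The remaining case is $d > r+1$, which is precisely the hypothesis under which Theorem \ref{thm:main} applies. That theorem identifies $\ker \Phi_{n,r+\eps}$ with the cell ideal $A[\notlessdom \alpha(d,r)^\tr]$ of $A = \Bbbk W_d$ and exhibits the image of the set $\{y_{\sts\stt} \mid \sts,\stt \text{ standard},\ [\sts]=[\stt] \dom \alpha(d,r)\}$ as a cellular basis of $A/(\ker \Phi_{n,r+\eps}) \cong \im \Phi_{n,r+\eps}$. In particular $\im \Phi_{n,r+\eps}$ is cellular, its cell-poset being the partitions of $d$ with at least $d-r$ parts, and the anti-involution is the one induced from $w \mapsto w^{-1}$. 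Combining the two cases yields the corollary for every $\eps \in \{0,1/2\}$ and every integral domain $\Bbbk$.

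I do not expect a genuine obstacle here, since the substantive work is already packaged in Theorem \ref{thm:main}; the corollary is essentially a repackaging. The only points requiring care are invoking the integral Schur--Weyl duality in the correct direction, so that the centraliser is identified with $\im \Phi_{n,r+\eps}$ rather than merely mapping into it, and recording that a quotient of a cellular algebra by a cell ideal is again cellular --- a standard fact from \cite{Graham-Lehrer}, which is exactly what allows the residues of the $y_{\sts\stt}$ with $[\sts]=[\stt] \dom \alpha(d,r)$ to serve as a cellular basis of the quotient.
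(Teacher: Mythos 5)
Your proposal is correct and follows essentially the same route as the paper: invoke the integral Schur--Weyl duality of \cite{BDM:SWD} to identify $\End_{\Ptn_{r+\eps}(n)}(\bV^{\otimes r})$ with $\im\Phi_{n,r+\eps}$, then read off cellularity of that image from Theorem \ref{thm:main}. Your separate treatment of the faithful range $d\le r+1$ (where the image is all of $\Bbbk W_d$) is a careful addition that the paper leaves implicit, but it does not change the argument.
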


\begin{proof}
By the main result of \cite{BDM:SWD}, that Schur--Weyl duality holds
over a commutative ring $\Bbbk$, the representation
$\Phi_{n,r+\eps}$ surjects onto the algebra
$\End_{\Ptn_{r+\eps}(n)}(\bV^{\otimes r})$. By Theorem
\ref{thm:main}, that image is cellular.
\end{proof}

\begin{rmk}
Stability phenomena for the symmetric group in characteristic zero are
well-documented (see for example \cites{BDO15,Deligne}).  In
characteristic $p>0$ much less is known, but instances of such
phenomena have already been discovered using partition algebra ideas
\cite{MW}.  We hope that the Schur--Weyl duality of
\cite{BDM:SWD} and resulting Schur functors will provide the rigorous
framework necessary to explore these ideas further.
\end{rmk}

\medskip

\noindent{\bf Acknowledgements: } 
The first author is grateful for funding from EPSRC fellowship  grant EP/V00090X/1.

\bigskip

\begin{bibdiv}
\begin{biblist}

\bib{BH:1}{article}{
   author={Benkart, G.},
   author={Halverson, T.},
   title={Partition algebras $ {P}_k(n)$ with $2k>n$ and the
   fundamental theorems of invariant theory for the symmetric group
   $\mathfrak{S}_n$},
   journal={J. Lond. Math. Soc. (2)},
   volume={99},
   date={2019},
   number={1},
   pages={194--224},
  }

\bib{BH:2}{article}{
   author={Benkart, G.},
   author={Halverson, T.},
   title={Partition algebras and the invariant theory of the symmetric
   group},
   conference={
      title={Recent trends in algebraic combinatorics},
   },
   book={
      series={Assoc. Women Math. Ser.},
      volume={16},
      publisher={Springer, Cham},
   },
   date={2019},
   pages={1--41},
 }

\bib{BDO15}{article}{
   author={Bowman, C.},
   author={{De Visscher}, M.},
   author={Orellana, R.},
   title={The partition algebra and the Kronecker coefficients},
   journal={Trans. Amer. Math. Soc.},
   volume={367},
   date={2015},
   number={5},
   pages={3647--3667},
 }

\bib{BDM:SWD}{article}{ 
   author={Bowman, Chris}, author={Doty, Stephen}, 
   author={Martin, Stuart}, title={Integral Schur--Weyl duality 
     for partition algebras},
   note={Preprint},
   date={2018},
}

\bib{BEG}{article}{
   author={Bowman, C.},
   author={Enyang, J.},
   author={Goodman, F.},
   title={The cellular second fundamental theorem of invariant theory for
   classical groups},
   journal={Int. Math. Res. Not. IMRN},
   date={2020},
   number={9},
   pages={2626--2683},
 }

\bib{CO11}{article}{
   author={Comes, Jonathan},
   author={Ostrik, Victor},
   title={On blocks of Deligne's category $\underline{\rm Re}{\rm p}(S_t)$},
   journal={Adv. Math.},
   volume={226},
   date={2011},
   number={2},
   pages={1331--1377},
 }

\bib{CO14}{article}{
   author={Comes, Jonathan},
   author={Ostrik, Victor},
   title={On Deligne's category $\underline{\rm Re}{\rm p}^{ab}(S_d)$},
   journal={Algebra Number Theory},
   volume={8},
   date={2014},
   number={2},
   pages={473--496},
 }

\bib{CW12}{article}{
   author={Comes, Jonathan},
   author={Wilson, Benjamin},
   title={Deligne's category $\underline{\rm{Rep}}(GL_\delta)$ and
   representations of general linear supergroups},
   journal={Represent. Theory},
   volume={16},
   date={2012},
   pages={568--609},
 }

\bib{Carter-Lusztig}{article}{
    author={Carter, Roger W.},
    author={Lusztig, George},
    title={On the modular representations of the general linear and
      symmetric groups},
    journal={Math. Z.},
    volume={136}, date={1974}, pages={193--242},
 }

\bib{Deligne}{article}{
   author={Deligne, P.},
   title={La cat\'egorie des repr\'esentations du groupe sym\'etrique $S_t$,
   lorsque $t$ n'est pas un entier naturel},
   language={French, with English and French summaries},
   conference={
      title={Algebraic groups and homogeneous spaces},
   },
   book={
      series={Tata Inst. Fund. Res. Stud. Math.},
      volume={19},
      publisher={Tata Inst. Fund. Res., Mumbai},
   },
   date={2007},
   pages={209--273},
 }

\bib{Donkin}{article}{
  author={Donkin, Stephen},
  title={Cellularity of endomorphism algebras of Young permutation
    modules},
  note={Preprint, arXiv:2006.02259},
   date={2020},
}

\bib{Doty:SWD}{article}{
   author={Doty, Stephen},
   title={Schur-Weyl duality in positive characteristic},
   conference={
      title={Representation theory},
   },
   book={
      series={Contemp. Math.},
      volume={478},
      publisher={Amer. Math. Soc., Providence, RI},
   },
   date={2009},
   pages={15--28},
 }

\bib{Doty-Nyman}{article}{
   author={Doty, S.},
   author={Nyman, K.},
   title={Annihilators of permutation modules},
   journal={Q. J. Math.},
   volume={62},
   date={2011},
   number={1},
   pages={87--102},
 }

\bib{Graham-Lehrer}{article}{
   author={Graham, J.},
   author={Lehrer, G. },
   title={Cellular algebras},
   journal={Invent. Math.},
   volume={123},
   date={1996},
   number={1},
   pages={1--34},
 }

\bib {MR2979865}{article}{
  author = {Lehrer, Gustav}, author={ Zhang, Ruibin},
     title = {The second fundamental theorem of invariant theory for the
              orthogonal group},
    journal = {Ann. of Math. (2)},
    volume = {176},
      YEAR = {2012},
    NUMBER = {3},
     PAGES = {2031--2054},
 }

\bib{Halverson-Ram}{article}{
   author={Halverson, Tom},
   author={Ram, Arun},
   title={Partition algebras},
   journal={European J. Combin.},
   volume={26},
   date={2005},
   number={6},
   pages={869--921},
 }

\bib{Haerterich}{article}{
   author={H{\"a}rterich, Martin},
   title={Murphy bases of generalized Temperley-Lieb algebras},
   journal={Arch. Math. (Basel)},
   volume={72},
   date={1999},
   number={5},
   pages={337--345},
 }


\bib{James}{book}{
   author={James, G. D.},
   title={The representation theory of the symmetric groups},
   series={Lecture Notes in Mathematics},
   volume={682},
   publisher={Springer, Berlin},
   date={1978},
   pages={v+156},
 }

\bib{Jones}{article}{
   author={Jones, V. F. R.},
   title={The Potts model and the symmetric group},
   conference={
      title={Subfactors},
      address={Kyuzeso},
      date={1993},
   },
   book={
      publisher={World Sci. Publ., River Edge, NJ},
   },
   date={1994},
   pages={259--267},
 }

\bib{Martin:90}{article}{
   author={Martin,   P.},
   title={Representations of graph Temperley-Lieb algebras},
   journal={Publ. Res. Inst. Math. Sci.},
   volume={26},
   date={1990},
   number={3},
   pages={485--503},
}

\bib{Marbook}{book}{
   author={Martin, P.},
   title={Potts models and related problems in statistical mechanics},
   series={Series on Advances in Statistical Mechanics},
   volume={5},
   publisher={World Scientific Publishing Co., Inc., Teaneck, NJ},
   date={1991},
   pages={xiv+344},
 }

\bib{Martin:94}{article}{
   author={Martin, P.},
   title={Temperley-Lieb algebras for nonplanar statistical mechanics---the
   partition algebra construction},
   journal={J. Knot Theory Ramifications},
   volume={3},
   date={1994},
   number={1},
   pages={51--82},
 }

\bib{Martin:96}{article}{
   author={Martin, P.},
   title={The structure of the partition algebras},
   journal={J. Algebra},
   volume={183},
   date={1996},
   number={2},
   pages={319--358},
 }


\bib{Martin:2000}{article}{
   author={Martin, P. },
   title={The partition algebra and the Potts model transfer matrix spectrum
   in high dimensions},
   journal={J. Phys. A},
   volume={33},
   date={2000},
   number={19},
   pages={3669--3695},
 }

 \bib{MW}{article}{
   author={Martin, P.},
   author={Woodcock, D.},
   title={The partition algebras and a new deformation of the Schur
   algebras},
   journal={J. Algebra},
   volume={203},
   date={1998},
   number={1},
   pages={91--124},
}

\bib{Murphy92}{article}{
   author={Murphy, G. E.},
   title={On the representation theory of the symmetric groups and
   associated Hecke algebras},
   journal={J. Algebra},
   volume={152},
   date={1992},
   number={2},
   pages={492--513},
 }

\bib{Murphy95}{article}{
   author={Murphy, G. E.},
   title={The representations of Hecke algebras of type $A_n$},
   journal={J. Algebra},
   volume={173},
   date={1995},
   number={1},
   pages={97--121},
 }

\bib{Stanley}{book}{
   author={Stanley, Richard P.},
   title={Enumerative combinatorics. Volume 1},
   series={Cambridge Studies in Advanced Mathematics},
   volume={49},
   edition={2},
   publisher={Cambridge University Press, Cambridge},
   date={2012},
 }

\end{biblist}
\end{bibdiv}

\end{document}